\numberwithin{equation}{section}
\newtheorem{theorem}{Теорема}[section]
\newtheorem{proposition}{Предложение}[section]
\newtheorem{remark}{Замечание}[section]
\newtheorem{corollary}{Следствие}[section]
\newcommand{\partialder}[2]{\frac{\partial #1}{\partial #2}}
\newcommand{\partialdersing}[1]{\frac{\partial}{\partial #1}}
\DeclareMathOperator{\R}{\mathbb{R}}
\DeclareMathOperator{\const}{const}
\DeclareMathOperator{\eps}{\varepsilon}
\DeclareMathOperator{\algLie}{\mathfrak{g}}
\DeclareMathOperator{\set}{\Omega}
\DeclareMathOperator{\polarSet}{\Omega^{\circ}}
\DeclareMathOperator{\arcosh}{arcosh}
\DeclareMathOperator{\argmax}{argmax}
\DeclareMathOperator{\Length}{Len}
\DeclareMathOperator{\LenCurve}{L}
\DeclareMathOperator{\Area}{Area}
\DeclareMathOperator{\Lim}{Lim}
\begin{document}

\title{Об одной изопериметрической задаче на плоскости Лобачевского с левоинвариантной финслеровой структурой
\thanks{Исследование выполнено за счет гранта Российского научного фонда (проект № 20-11-20169).}	
}
\date{}
\author{В.~А.~Мырикова \footnote{Математический институт им. В.А. Стеклова Российской академии наук. E-mail: myrikova.va@gmail.com}}
\maketitle
\tableofcontents
\newpage

\section{Введение}
Данная работа продолжает изучение геометрии на финслеровом аналоге плоскости Лобачевского. Некоторые свойства, в том числе и финслеровы геодезические, были изучены ранее, например, в \cite{Gribanova} и \cite{lobachevsky-geodesics}. Основная идея, успешно примененная в указанных работах, -- представить плоскость Лобачевского как группу Ли с левоинвариантной финслеровой структурой. 

А именно, рассматривается группа Ли $G$ собственных афинных преобразований прямой: $\lambda\in\mathbb{R}\mapsto y\lambda +x\in\mathbb{R}$, где элемент $(x,y)$ лежит в верхней полуплоскости. Любая норма, заданная в алгебре Ли группы $G$, порождает левоинвариантную метрику с помощью левых сдвигов. Несложно проверить, что классическая евклидова норма приведет таким построением к стандартной римановой метрике на плоскости Лобачевского. Если же в алгебре Ли задана произвольная неевклидова норма (или даже почти норма), мы получим некоторое финслерово пространство, которое естественно считать финслеровым аналогом плоскости Лобачевского.

В данной работе предпринимается попытка поставить и изучить изопериметрическую задачу на $G$ с описанной выше финслеровой структурой. Конечно, основная трудность заключается в том, чтобы понять, какое определение площади на финслеровом многообразии было бы логично использовать. Имея групповую структуру и левоинвариантную метрику, нам кажется естественным поставить левоинвариантную изопериметрическую задачу, то есть использовать левоинвариантную форму площади. 

Таким образом будет получена геометрическая постановка \eqref{geom-problem-statement}, которую мы далее переформулируем как задачу быстродействия \eqref{optimal-control-problem-statement}. Применение принципа максимума Понтрягина и аппарата выпуклой тригонометрии, предложенного в \cite{convexTrig}, позволят нам получить изопериметрические контуры в явном виде (теорема \ref{theorem-optimal-curves}). Обобщение изопериметрического неравенства будет получено в параметрической форме (теорема \ref{theorem-iso-ineq}).

Важно отметить работу \cite{Busemann1947TheIP}, где в частности были найдены изопериметрические контуры и изопериметрическое неравенство на плоскости Минковского с евклидовой площадью. Мы сравним наши результаты с результатами указанной работы.

\section{Благодарности}
Автор сердечно благодарит своего научного руководителя, Льва Вячеславовича Локуциевского, за постановку задачи, неугасаемое внимание к работе и поддержку в минуты отчаяния. 

\section{Геометрическая постановка задачи}
Рассмотрим группу Ли $G$ собственных афинных преобразований прямой $\lambda\mapsto y\lambda +x$, то есть многообразие $\{(x,y),~x\in\R,~y>0\}$ с групповой операцией $(x_1,y_1)\cdot(x_2,y_2)=(x_1+x_2y_1,y_1y_2)$. Обозначим единицу группы $e=(0,1)$ и алгебру Ли $\algLie=T_eG\cong\R^2$.

Определим на $G$ левоинвариантную финслерову структуру следующим образом. Пусть задано $\Omega\subset\algLie$ - произвольное выпуклое компактное множество, содержащее ноль во внутренности. Тогда определим $\Omega_g\subset T_gG,g\in G$ из условия левоинвариантности: $\Omega_g=d_eL_g(\Omega)$, где $L_g$ -- левый сдвиг на элемент $g$. Полученное семейство множеств задает в касательном пространстве в каждой точке $g\in G$ почти норму $\|\cdot\|_g$ (в точности норму, если $\Omega=-\Omega$) как функцию Минковского множества $\Omega_g$. В координатах получаем $\|(\xi_1,\xi_2)\|_{(x,y)}=\frac{1}{y}\|(\xi_1,\xi_2)\|_e$.

Во избежание путаницы с перегруженным термином <<кривая>>, явно укажем используемые наименования. Кривая -- непрерывное отображение промежутка в $\R^n$. Контур -- образ замкнутой кривой, которую будем называть параметризацией этого контура. Простой контур -- образ замкнутой кривой без самопересечений, кроме начала и конца. Липшицевый контур -- образ липшицевой замкнутой кривой.

Изопериметрическая задача -- это задача поиска простого контура наименьшей длины при фиксированной площади ограничиваемой им области. Контуры с данным свойством будем называть изопериметрическими контурами. Таким образом, для математической постановки задачи необходимо уметь измерять длины контуров и площади областей.

Пусть $\gamma(t)=(x(t),y(t))\in G$, $t\in [a,b]$ -- произвольная липшицева кривая. Финслерова структура позволяет естественно определить ее длину
$$
\LenCurve(\gamma(t))=\int\limits_a^b\|\dot{\gamma}(t)\|_{\gamma(t)}dt=\int\limits_a^b\frac{\|(\dot{x}(t),\dot{y}(t))\|_e}{y(t)}dt.
$$
Важно помнить, что в финслеровом случае длины кривых с противоположными ориентациями, вообще говоря, различны. А потому для каждого липшицева контура $\gamma$ определены две длины $\Length_+(\gamma)=\LenCurve(\gamma_+(t))$ и $\Length_-(\gamma)=\LenCurve(\gamma_-(t))$, где $\gamma_+(t)$ -- произвольная липшицева параметризация контура $\gamma$ с положительной ориентацией, а $\gamma_-(t)$ -- с отрицательной.

В отличие от риманового случая, финслерова структура не приводит к естественному понятию площади области. Без сомнений, можно предложить различные варианты определения и рассмотреть соответствующие им изопериметрические задачи. В данной работе предлагается использовать следующую форму площади: $\omega=\frac{1}{y^2}dx\wedge dy$. Тогда для области $U\subset G$ ее площадь $\Area(U)$ выражается через интеграл
$$
\Area(U)=\int\limits_U \frac{1}{y^2}dx\wedge dy.
$$
Не утверждая, что выбранное определение является единственно верным, предложим несколько мотивирующих соображений разной степени разумности. Данная форма может быть получена из требования левоинвариантности на $G$ (<<согласованность>> с групповой структурой), что в данном случае также эквивалентно и требованию равенства площадей множеств $\Omega_g$ в каждой точке $g\in G$ (<<согласованность>> с финслеровой нормой). В случае, когда $\Omega$ есть единичный круг, финслерова метрика является римановой и порождает данную форму площади стандартным образом. Также данная форма выделяется среди остальных в следующем смысле. Рассмотрение изопериметрической задачи с формой площади общего вида $w(x,y)dx\wedge dy$ ($w>0$) приводит к системе ОДУ, аналогичной \eqref{pmp-diff-eq-adjoint}: $\dot{h}_1=\psi\dot{y}, \dot{h}_2=-\psi\dot{x}$, где функция $\psi$ является первым интегралом тогда и только тогда, когда $w(x,y)$ пропорциональна $\frac{1}{y^2}$.

Итак, для любой точки $g_{\circ}\in G$ и числа $A_{\circ}> 0$ требуется найти такие простые липшицевы контуры $\gamma_+\subset G$, $\gamma_-\subset G$, что
\begin{equation}
	\label{geom-problem-statement}
	\tag{GP$_{\pm}$}
	\begin{cases}
		\Length_{\pm}(\gamma_\pm) \to \min,\\
		\Area(U_{\gamma_\pm}) = A_{\circ},\\
		g_{\circ}\in \gamma_\pm,
	\end{cases}
\end{equation}
где $U_{\gamma_\pm}$ -- области, ограниченные контурами $\gamma_\pm$ соответственно.

Заметим, что в случае, когда $\Omega$ является евклидовым единичным кругом, каждая из задач \eqref{geom-problem-statement} есть в точности изопериметрическая задача на классической плоскости Лобачевского кривизны $-1$ (для плоскости кривизны $-\frac{1}{R^2}$ нужно взять круг радиуса $\frac{1}{R}$ и умножить форму площади на константу $R^2$).

\section{Переход к задаче оптимального управления}
\label{section_optimal_control_statement}
Следующие преобразования достаточно стандартны и фактически повторяют поиск изопериметрических кривых на евклидовой плоскости.

Для липшицева контура $\gamma_+$ всегда можно выбрать липшицеву натуральную параметризацию $\gamma_{+}(t)=(x(t), y(t)),t\in [0,T]$ с положительной ориентацией. То есть, такую, что $\|\dot{\gamma}_+(t)\|_{\gamma_+(t)}=1$ для п.в. $t$. 
Тогда $\Length_+(\gamma_+)=T$, и $(\dot{x},\dot{y})\in \partial\Omega_{(x,y)}$ для п.в. $t$. Введем управление $(u(t),v(t))=\frac{1}{y}(\dot{x},\dot{y})\in \partial\Omega$ для п.в. $t\in [0,T]$.

Далее, используя формулу Грина для областей с липшицевой границей (см. \cite{EvansGariepy}), выразим площадь области $U_{\gamma_+}$ через параметризацию $\gamma_{+}(t)$ ее границы
\begin{equation}
	\label{area-set-integral}
\Area(U_{\gamma_+})=\int\limits_0^T \frac{\dot{x}(t)}{y(t)}dt=\int\limits_0^T u(t)dt.
\end{equation}
Стандартным образом введем вспомогательную липшицеву переменную $z(t)$, соответствующую изменению площади: $\dot{z}(t)=u(t)$ п.в., $z(0)=0$, $z(T)=A_{\circ}$. Выбор один-формы $\alpha$ со свойством $d\alpha=\omega$ в формуле Грина произволен, но влияет на переменную $z$. В \eqref{area-set-integral} мы выбрали форму $\frac{1}{y}dx$ наиболее простого вида, которая, в частности, левоинвариантна.

Наконец, выполним релаксацию, переходя от $\partial\Omega$ к выпуклому множеству $\Omega$. 

Заметим, что для контура $\gamma_-$ необходимо выбрать натуральную параметризацию с отрицательной ориентацией, что приведет к минусу в формуле Грина и потому к условию $z(T)=-A_{\circ}$. Это будет единственное отличие в формулировках двух задач оптимального управления, поэтому будет удобно объединить их в одну.

Получаем следующую задачу быстродействия
\begin{equation}
	\tag{CP}
	\label{optimal-control-problem-statement}
	\begin{cases}
		T\to\min,
		\\ \dot{x}(t)=y(t)u(t) \text{ для п.в. } t\in [0,T],
		\\
		\dot{y}(t)=y(t)v(t)\text{ для п.в. } t\in [0,T],
		\\
		\dot{z}(t)=u(t)\text{ для п.в. } t\in [0,T],
		\\
		(u(t),v(t))\in \Omega \text{ для п.в. } t\in [0,T],
		\\
		(x(0),y(0))=(x(T),y(T))=(x_{\circ},y_{\circ}),y_{\circ}>0,
		\\
		z(0)=0, z(T)=F_{\circ}\ne 0,
	\end{cases}
\end{equation}
где $x(t),y(t),z(t)$ - липшицевы, $u(t),v(t)\in L_{\infty}$, $t\in [0,T]$, $T>0$, $|F_{\circ}|=A_{\circ}$.

В постановке \eqref{optimal-control-problem-statement} неявно сохраняется условие $y>0$, так как любая допустимая траектория не может покинуть верхнюю полуплоскость. Однако потеряно условие $\gamma_{\pm}(t)\ne\gamma_{\pm}(s)$ при $t,s\in (0,T)$, $t\ne s$. Потому функционал $T$ минимизируется на большем множестве допустимых траекторий, чем следовало бы. Тем не менее, если минимум достигается на кривой без самопересечений (что будет показано ниже), то соответствующий контур является оптимальным в соответствующей задаче \eqref{geom-problem-statement}.

Задача \eqref{optimal-control-problem-statement} удовлетворяет всем условиям теоремы Филиппова о существовании оптимального решения задачи быстродействия (существование допустимых траекторий будет доказано ниже явным их нахождением). Таким образом, оптимальная траектория существует и должна быть среди экстремалей, удовлетворяющих принципу максимума Понтрягина.

\begin{remark}
	Задача \eqref{optimal-control-problem-statement} может быть рассмотрена как левоинвариантная задача поиска субфинслеровых геодезических на трехмерной группе Ли $\tilde{G}=\{(x,y,z),$ $ y>0\}$ с групповой операцией $(x_1,y_1,z_1)\cdot (x_2,y_2,z_2)=(x_1+x_2y_1,y_1y_2,z_1+z_2)$. Субфинслерова структура определяется двумерным распределением $\langle \xi=y\partialdersing{x}+\partialdersing{z},\eta=y\partialdersing{y}\rangle\subset T_{(x,y,z)}\tilde{G}$ с почти нормой $\rho$, заданной в каждой точке равенством $\rho(a\xi+b\eta)=\|(a,b)\|_e$, где $a,b\in\R$.
\end{remark}

\section{Принцип максимума Понтрягина}
\label{section-pmp}
Согласно принципу максимума Понтрягина для задачи быстродействия, если $x(t),y(t),z(t),u(t),v(t)$ -- оптимальный процесс в задаче \eqref{optimal-control-problem-statement}, то существуют такие липшицевы функции $p(t),q(t),r(t)$, не равные тождественному нулю одновременно, что для функции Понтрягина $\mathscr{H}$, имеющей вид
$$
\mathscr{H}(x,y,z,p,q,r,u,v)=uh_{1}+vh_{2},
$$ 
где $h_{1}=py+r$ и $h_{2}=qy$, выполнено для п.в. $t\in [0,T]$
$$
\begin{gathered}
(u(t),v(t))\in \argmax\limits_{(w_1,w_2)\in\Omega}\mathscr{H}(x(t),y(t),z(t),p(t),q(t),r(t),w_1,w_2),\\
\mathscr{H}(x(t),y(t),z(t),p(t),q(t),r(t),u(t),v(t))\equiv H_{\circ}\ge 0,
\end{gathered}
$$а также
$$
\dot{p}(t)=-\partialder{\mathscr{H}}{x}=0,\quad
\dot{q}(t)=-\partialder{\mathscr{H}}{y}=-pu-qv,\quad \dot{r}(t)=-\partialder{\mathscr{H}}{z}=0.
$$
Таким образом, $p(t)\equiv p_{\circ}=\const$, $r(t)\equiv r_{\circ}=\const$. Для функций $h_1, h_2$ получаем систему
\begin{equation}
	\label{pmp-diff-eq-adjoint}
	\begin{cases}
		   \dot{h}_{1}=p_{\circ}\dot{y},
		\\ \dot{h}_{2}=-p_{\circ}\dot{x}.
	\end{cases}
\end{equation} 

\textbf{Случай} $H_{\circ}=0$. Из условия максимизации $\mathcal{H}$ следует, что $h_1=h_2\equiv 0$, откуда, учитывая невырожденность сопряженного множителя, можно получить только тождественную траекторию $x\equiv x_{\circ}, y\equiv y_{\circ}, z\equiv 0$, которая не допустима.

\textbf{Случай} $H_{\circ}>0$. Анализ данного случая использует аппарат выпуклой тригонометрии и приемы, подробно изложенные в работах \cite{convexTrig} и \cite{lobachevsky-geodesics}. Мы приведем лишь некоторые необходимые определения и факты, отсылая заинтересованного читателя к указанным выше работам.

При $H_{\circ}>0$, в силу принципа максимума, управление $(u,v)$ обязано принадлежать границе множества $\Omega$, а поэтому точка $(h_1,h_2)$ двигается по границе растянутой в $H_{\circ}$ раз поляры $\Omega^{\circ}$ множества $\Omega$. Этот факт можно удобно сформулировать, используя обобщенные тригонометрические функции $\cos_{\Omega}$ и  $\sin_{\Omega}$, предложенные впервые в \cite{convexTrig}. Данные функции являются липшицевыми и периодическими с периодом $2S_{\Omega}$, где $S_{\Omega}$ есть евклидова площадь множества $\Omega$. При этом, замкнутая кривая $(\cos_{\Omega}(\theta), \sin_{\Omega}(\theta))$, $\theta\in[0,2S_{\Omega}]$ параметризует границу $\partial\Omega$ множества $\Omega$, имеет положительную ориентацию и начальную точку на положительной оси абсцисс: $\sin_{\Omega}(0)=0$, $\cos_{\Omega}(0)>0$. В случае, когда $\Omega$ есть единичный круг, функции $\cos_{\Omega},\sin_{\Omega}$ совпадают с обычными тригонометрическими функциями. Параметр $\theta$ будем называть по аналогии с классическим случаем обобщенным углом. Также нам понадобятся и функции $\cos_{\Omega^{\circ}}, \sin_{\Omega^{\circ}}$ с аналогичными свойствами, определенные для поляры $\polarSet$ и параметризующие ее границу. Евклидову площадь поляры будем обозначать аналогично через $S_{\polarSet}$.

Важным является понятие соответствующих (относительно $\Omega$) углов. Обобщенные углы $\theta$ и $\theta^{\circ}$ называются соответствующими ($\theta\leftrightarrow\theta^{\circ}$), если выполнено обобщенное тригонометрическое тождество: 
\begin{equation}
\label{pythagorean-identity}
\theta\leftrightarrow\theta^{\circ}\quad\Longleftrightarrow\quad\cos_{\Omega}(\theta)\cos_{\Omega^{\circ}}(\theta^{\circ})+\sin_{\Omega}(\theta)\sin_{\Omega^{\circ}}(\theta^{\circ})=1.
\end{equation}
Таким образом определено многозначное отображение $\theta(\theta^{\circ})$, ставящее в соответствие каждому углу $\theta^{\circ}$ отрезок (с точностью до периода $2S_{\set}$) соответствующих ему углов. Данный отрезок невырожден, только если $\theta^{\circ}$ -- точка недифференцируемости границы поляры, а потому
для п.в. $\theta^{\circ}$ множество $\theta(\theta^{\circ})$ состоит из одного элемента. Далее будем считать (монотонную) функцию $\theta(\theta^{\circ})$ однозначной, подразумевая, что в точках негладкости $\partial\polarSet$ выбран любой соответствующий угол из образа.

Среди всех возможных параметризаций границ $\partial\set$, $\partial\polarSet$ параметризация обобщенными тригонометрическими функциями удовлетворяет наиболее простой (в определенном смысле) системе ОДУ. А именно, для п.в. $\theta^{\circ}$ выполнено
\begin{equation}
	\label{conv-trig-DE-formulae}
	\cos'_{\Omega^{\circ}}(\theta^{\circ})=-\sin_{\Omega}(\theta(\theta^{\circ})),\quad \sin'_{\Omega^{\circ}}(\theta^{\circ})=\cos_{\Omega}(\theta(\theta^{\circ})),
\end{equation}
что по сути и является определением данной параметризации (с учетом начальных значений).

Используя описанные выше инструменты, получаем, что для некоторых функций $\theta^{\circ}(t)$, $\theta(t)$ должно быть выполнено
$$
\label{pmp-h1-h2-formulae}
\begin{cases}
	h_1(t)=H_{\circ}\cos_{\Omega^{\circ}}(\theta^{\circ}(t)),
	\\h_2(t)=H_{\circ}\sin_{\Omega^{\circ}}(\theta^{\circ}(t)),
	\\u(t)=\cos_{\Omega}(\theta(t)),
	\\v(t)=\sin_{\Omega}(\theta(t)),
	\\ \theta(t)\leftrightarrow \theta^{\circ}(t).
\end{cases}
$$
В работе \cite{convexTrig} представлены формулы обобщенной полярной замены координат, откуда следует, что для липшицевых функций $h_1(t),h_2(t)$, не обращающихся одновременно в ноль, функция $\theta^{\circ}(t)$ является липшицевой и удовлетворяет п.в. уравнению
\begin{equation}
\label{pmp-polar-ang-dot}
\dot{\theta^{\circ}}=\frac{1}{H_{\circ}^2}(\dot{h}_2h_1-\dot{h}_1h_2)=-\frac{p_{\circ}}{H_{\circ}}y.
\end{equation}
Функция $\theta(t)$ будет восстанавливаться по $\theta^{\circ}(t)$ из соотношения соответствия. Если граница поляры $\Omega^{\circ}$ является $C^{1}$ гладкой, функция $\theta(t)$ восстанавливается единственным образом и является непрерывной. В общем случае значение $\theta(t)$ восстанавливается однозначно для п.в. $t$, а функция $\theta(t)$ измерима. Учитывая всё вышесказанное, можно записать $\theta(t)=\theta(\theta^{\circ}(t))$.

\textbf{Случай} $H_{\circ}>0$, $r_{\circ}=0$. В этом случае условие максимума теряет связь с переменной $z(t)$, что приводит к самодостаточной задаче поиска геодезических $(x(t),y(t))$ на финслеровой плоскости Лобачевского, которая была полностью решена в работе \cite{lobachevsky-geodesics}. Не вдаваясь в подробности, дадим несколько пояснений. При $p_{\circ}=0$ геодезические называются <<вертикальными>> (аналоги вертикальных прямых для классической плоскости Лобачевского). В этом случае управление $(u,v)$ почти всюду принадлежит одной из двух горизонтальных граней (возможно вырождающихся в точку) границы $\partial\Omega$, где $\sin_{\Omega}$ достигает максимального или минимального значения. Таким образом, мы получаем либо линейные траектории c постоянным управлением (если грань есть точка), либо особые по грани траектории. При $p_{\circ}\ne 0$ геодезические называются <<горизонтальными>> (аналоги верхних половин окружностей в классическом случае) и представляют собой верхние <<половины>> границы поляры, растянутой, повернутой на $\pm\frac{\pi}{2}$ в зависимости от знака $p_{\circ}$ и сдвинутой горизонтально.

Никакие геодезические не являются допустимыми экстремалями для нашей задачи, так как не могут быть замкнутыми. Неочевидным является лишь случай особых по грани геодезических. Для последующих рассуждений докажем более общее утверждение: никакая особая по грани траектория не может быть замкнутой.

Действительно, пусть $(x(t),y(t)),t\in[0,T]$ -- особая по грани с вершинами $(u_1,v_1)$ и $(u_2,v_2)$ траектория системы. Так как управление $(u(t),v(t))$, соответствующее данной траектории, почти всюду лежит в выпуклом секторе между вершинами грани, а касательный вектор $(\dot{x}(t),\dot{y}(t))$ почти всюду сонаправлен с вектором $(u(t),v(t))$, то для любого момента времени $s\in[0,T)$ часть кривой $(x(t),y(t))$ при $t\in(s,T]$ лежит в выпуклом секторе между линейными траекториями системы, соответствующими постоянным управлениям $(u_1,v_1)$ и $(u_2,v_2)$ и выходящими из точки $(x(s),y(s))$, что противоречит замкнутости траектории $(x(t),y(t))$.

\textbf{Случай} $H_{\circ}>0$, $r_{\circ}\ne 0$, $p_{\circ}=0$. В этом случае $\theta^{\circ}\equiv \theta^{\circ}(0)$, $h_1\equiv r_{\circ}\ne 0$, $h_2=\const$.
\\Если множество $\theta(\theta^{\circ}(0))$ состоит только из одного элемента, мы получаем $u(t)\equiv u(0)$, $v(t)\equiv v(0)$, причем $v(0)$ больше, чем $\min(\sin_{\Omega})$, и меньше, чем $\max(\sin_{\Omega})$, а траектория $x(t),y(t)$ линейная (для вертикальных геодезических $v(0)$, наоборот, обязано быть экстремумом). Если же множество $\theta(\theta^{\circ}(0))$ есть невырожденный отрезок, то измеримое управление $u(t),v(t)$ может быть выбрано многими способами с условием принадлежности определенной негоризонтальной грани множества $\Omega$. Соответствующие траектории $x(t),y(t)$ являются особыми по этой грани и расположены в выпуклом секторе между линейными траекториями, полученными при выборе управления, тождественно равного вершинам этой грани. При разборе предыдущего случая было показано, что особые по грани траектории не могут быть замкнуты. Поэтому перечисленные траектории также не являются допустимыми экстремалями для нашей задачи. 

\textbf{Основной случай} $H_{\circ}>0$, $r_{\circ}\ne 0$, $p_{\circ}\ne 0$. В этом случае функции $x,y$ могут быть легко найдены из системы \eqref{pmp-diff-eq-adjoint}:
$$
\begin{cases}
	x(t)=-\frac{1}{p_{\circ}}h_2(t)+c_{x\circ},\\
	y(t)=\frac{1}{p_{\circ}}h_1(t)+c_{y\circ},
\end{cases}
$$
где $c_{x\circ},c_{y\circ}$ -- константы, причем $c_{y\circ}=-\frac{r_{\circ}}{p_{\circ}}\ne 0$.

Еще раз заметим, что выбор любой другой формы площади значительно усложнил бы (или сделал бы интереснее) интегрирование системы в этом месте повествования.

Вводя обозначение $R_{\circ}=-\frac{H_{\circ}}{p_{\circ}}\ne 0$ и используя \eqref{pmp-polar-ang-dot}, окончательно получаем
\begin{equation}
\label{pmp-xy-formulae}
\begin{cases}
	x(t)=R_{\circ}\sin_{\polarSet}(\theta^{\circ}(t))+c_{x\circ},\\
	y(t)=-R_{\circ}\cos_{\polarSet}(\theta^{\circ}(t))+c_{y\circ},
	\\ \dot{\theta}^{\circ}(t)=\frac{y(t)}{R_{\circ}}=\frac{c_{y\circ}}{R_{\circ}}-\cos_{\polarSet}(\theta^{\circ}(t)).
\end{cases}
\end{equation}
Выписанные экстремали являются единственными претендентами на роль оптимальных в задаче \eqref{optimal-control-problem-statement}.

\section{Дальнейшее исследование экстремалей}
Из системы \eqref{pmp-xy-formulae} видно, что точка $(x,y)$ двигается по границе поляры $\polarSet$, повернутой на $\mp\frac{\pi}{2}$ (в зависимости от знака $R_{\circ}$), растянутой в $|R_{\circ}|$ раз и сдвинутой на вектор $(c_{x\circ},c_{y\circ})$. При этом, для замкнутости кривой точка $(x,y)$ должна совершить минимум один оборот по границе. Так как при следующих оборотах длина и площадь будут суммироваться, рассмотрим сперва, как связаны длина и площадь на экстремалях, полученных при разовом обороте.

\subsection{Функции $L_{\pm}(\lambda), F_{\pm}(\lambda)$}
Заметим, что так как параметризация границы тригонометрическими функциями имеет положительную ориентацию по определению, то ориентация замкнутых кривых \eqref{pmp-xy-formulae} положительна при $R_{\circ}>0$ и отрицательна при $R_{\circ}<0$.

Введем следующие обозначения:
\begin{equation}
\label{lambda-def}
	\begin{gathered}
	\lambda_{\circ}=\frac{c_{y\circ}}{R_{\circ}},\quad-M_{-}^{\circ}=\min_{[0,2S_{\polarSet}]}\cos_{\Omega^{\circ}}(\theta^{\circ})<0,\quad M_{+}^{\circ}=\max_{[0,2S_{\polarSet}]}\cos_{\Omega^{\circ}}(\theta^{\circ})>0.
	\end{gathered}
\end{equation}
Для того чтобы траектория \eqref{pmp-xy-formulae} при $R_{\circ}>0$ описывала простой замкнутый контур на $G$, необходимо и достаточно, чтобы угол $\theta^{\circ}$ совершал за время $T$ полный и только один оборот от $\theta^{\circ}(0)$ до $\theta^{\circ}(0)+2S_{\polarSet}$. Так как допустимая траектория не может покинуть верхнюю полуплоскость, выполнено $\lambda_{\circ}>M_{+}^{\circ}$. Аналогично, при $R_{\circ}<0$ необходимо и достаточно, чтобы угол $\theta^{\circ}$ совершал оборот от $\theta^{\circ}(0)$ до $\theta^{\circ}(0)-2S_{\polarSet}$, при этом выполнено $\lambda_{\circ}<-M_{-}^{\circ}$. 

Для выражения длины экстремалей и площади заметаемой ими области осуществим замену переменной $t$ на $\theta^{\circ}$ в соответствующих интегралах и воспользуемся периодичностью тригонометрических функций. Мы получаем, что длина и площадь экстремали зависят лишь от соотношения $\lambda_{\circ}=\frac{c_{y\circ}}{R_{\circ}}$, что приводит к рассмотрению функций $L_+(\lambda), F_+(\lambda)$ при $\lambda\in (M_+^{\circ},+\infty)$ -- значения длины и площади экстремалей с положительной ориентацией, и $L_-(\lambda), F_-(\lambda)$ при $\lambda\in (-\infty,-M_-^{\circ})$ -- значения длины и отрицательной площади экстремалей с отрицательной ориентацией. Непосредственные вычисления приводят к следующим выражениям
\begin{equation}
\begin{gathered}
\label{L-F-of-lambda-formulae-plus}
L_+(\lambda)=\int\limits_{0}^{2S_{\polarSet}}\frac{d\theta^{\circ}}{\lambda-\cos_{\Omega^{\circ}}(\theta^{\circ})},\quad
F_+(\lambda)=\int\limits_{0}^{2S_{\polarSet}}\frac{\cos_{\Omega}(\theta(\theta^{\circ}))d\theta^{\circ}}{\lambda-\cos_{\Omega^{\circ}}(\theta^{\circ})},\quad\lambda>M^{\circ}_+.
\end{gathered}
\end{equation}
\begin{equation}
\begin{gathered}
\label{L-F-of-lambda-formulae-minus}
L_-(\lambda)=\int\limits_{0}^{2S_{\polarSet}}\frac{d\theta^{\circ}}{\cos_{\Omega^{\circ}}(\theta^{\circ})-\lambda},\quad
F_-(\lambda)=\int\limits_{0}^{2S_{\polarSet}}\frac{\cos_{\Omega}(\theta(\theta^{\circ}))d\theta^{\circ}}{\cos_{\Omega^{\circ}}(\theta^{\circ})-\lambda},\quad\lambda<-M^{\circ}_-.
\end{gathered}
\end{equation}
Установим несколько свойств функций $L_{\pm}, F_{\pm}$, напрямую получаемых из интегральных формул \eqref{L-F-of-lambda-formulae-plus}, \eqref{L-F-of-lambda-formulae-minus}. 
Исследование функций $L_-,F_-$ может быть сведено к исследованию $L_+,F_+$ для изопериметрической задачи со множеством $-\set$ или проведено аналогично, поэтому далее будем рассматривать только функции $L_+,F_+$. В частности, если $\Omega = -\Omega$, то $M_{+}^{\circ}=M_{-}^{\circ}$, $L_{+}(\lambda)=L_{-}(-\lambda)$ и $F_{+}(\lambda)=-F_{-}(-\lambda)$ $\forall \lambda>M_{+}^{\circ}$.

\begin{proposition}
\label{L-A-DE}
Функции $L_{+},F_+\in C^{\infty}((M_+^{\circ},+\infty))$, и верно ${L'_{+}(\lambda)=\lambda F'_{+}(\lambda)}$.
\begin{proof}
Пользуясь дифференциальными соотношениями \eqref{conv-trig-DE-formulae}, перейдем к интегралу Лебега-Стильтьеса, для которого ввиду липшицевости тригонометрических функций справедлива формула интегрирования по частям.
$$
F_+(\lambda)=\int\limits_{0}^{2S_{\polarSet}}\frac{d\sin_{\polarSet}(\theta^{\circ})}{\lambda-\cos_{\polarSet}(\theta^{\circ})}=\left.\frac{\sin_{\polarSet}(\theta^{\circ})}{\lambda-\cos_{\polarSet}(\theta^{\circ})}\right\vert_{0}^{2S_{\polarSet}}-\int\limits_{0}^{2S_{\polarSet}}\sin_{\polarSet}(\theta^{\circ})d\left(\frac{1}{\lambda-\cos_{\polarSet}(\theta^{\circ})}\right).
$$
Учитывая периодичность обобщенных тригонометрических функций и снова соотношения \eqref{conv-trig-DE-formulae}, получаем
\begin{equation}
	\label{A-alt-form}
	F_+(\lambda)=\int\limits_{0}^{2S_{\polarSet}}\frac{\sin_{\polarSet}(\theta^{\circ})\sin_{\Omega}(\theta(\theta^{\circ}))}{(\lambda-\cos_{\polarSet}(\theta^{\circ}))^2}d\theta^{\circ}.
\end{equation}
Тогда, вычитая \eqref{A-alt-form} из выражения \eqref{L-F-of-lambda-formulae-plus} для $F_+$, имеем
$$
0=F_+(\lambda)-F_+(\lambda)=\int\limits_{0}^{2S_{\polarSet}}\frac{\cos_{\Omega}(\theta(\theta^{\circ}))(\lambda-\cos_{\polarSet}(\theta^{\circ}))-\sin_{\polarSet}(\theta^{\circ})\sin_{\Omega}(\theta(\theta^{\circ}))}{(\lambda-\cos_{\polarSet}(\theta^{\circ}))^2}d\theta^{\circ}.
$$
Воспользовавшись тригонометрическим тождеством \eqref{pythagorean-identity}, получаем:
$$
0=\lambda\int\limits_{0}^{2S_{\polarSet}}\frac{\cos_{\Omega}(\theta(\theta^{\circ}))}{(\lambda-\cos_{\polarSet}(\theta^{\circ}))^2}d\theta^{\circ}-\int\limits_{0}^{2S_{\polarSet}}\frac{1}{(\lambda-\cos_{\polarSet}(\theta^{\circ}))^2}d\theta^{\circ}=-\lambda F'_+(\lambda)+L'_{+}(\lambda).
$$
\end{proof}
\end{proposition}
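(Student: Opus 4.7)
The plan is to attack this in three stages: smoothness by differentiation under the integral sign, an alternative integral representation of $F_+$ obtained via integration by parts, and algebraic cancellation against the original representation using the Pythagorean identity \eqref{pythagorean-identity}.

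For smoothness, on any compact $[a,b]\subset(M_+^\circ,+\infty)$ the denominator $\lambda-\cos_{\Omega^{\circ}}(\theta^\circ)$ is bounded below by the positive constant $a-M_+^\circ$ uniformly in $\theta^\circ\in[0,2S_{\polarSet}]$. Consequently the integrands of $L_+$ and $F_+$, together with each of their $\lambda$-derivatives, are dominated uniformly in $\theta^\circ$. Standard dominated-convergence arguments then permit repeated differentiation under the integral sign, yielding $L_+,F_+\in C^\infty((M_+^\circ,+\infty))$ together with the explicit formulas
$$L'_+(\lambda)=-\int_0^{2S_{\polarSet}}\frac{d\theta^\circ}{(\lambda-\cos_{\Omega^{\circ}}(\theta^\circ))^2},\quad F'_+(\lambda)=-\int_0^{2S_{\polarSet}}\frac{\cos_{\Omega}(\theta(\theta^\circ))\,d\theta^\circ}{(\lambda-\cos_{\Omega^{\circ}}(\theta^\circ))^2}.$$

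For the identity $L'_+(\lambda)=\lambda F'_+(\lambda)$ I would compute $F_+$ in two different ways. The first relation in \eqref{conv-trig-DE-formulae} identifies the numerator of the original expression with $d\sin_{\Omega^{\circ}}(\theta^\circ)$, casting $F_+$ as a Lebesgue-Stieltjes integral. Integrating by parts, the boundary term vanishes by $2S_{\polarSet}$-periodicity, and the second relation in \eqref{conv-trig-DE-formulae} converts $d\cos_{\Omega^{\circ}}$ into $-\sin_{\Omega}(\theta(\theta^\circ))\,d\theta^\circ$, producing
$$F_+(\lambda)=\int_0^{2S_{\polarSet}}\frac{\sin_{\Omega^{\circ}}(\theta^\circ)\sin_{\Omega}(\theta(\theta^\circ))}{(\lambda-\cos_{\Omega^{\circ}}(\theta^\circ))^2}\,d\theta^\circ.$$
Subtracting this alternative expression from the original one must give zero; combining over the common denominator $(\lambda-\cos_{\Omega^{\circ}})^2$, the numerator becomes $\cos_{\Omega}(\theta(\theta^\circ))(\lambda-\cos_{\Omega^{\circ}}(\theta^\circ))-\sin_{\Omega}(\theta(\theta^\circ))\sin_{\Omega^{\circ}}(\theta^\circ)$. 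Applying \eqref{pythagorean-identity} collapses the $\cos_{\Omega}\cos_{\Omega^{\circ}}+\sin_{\Omega}\sin_{\Omega^{\circ}}$ piece to $1$, leaving $\lambda\cos_{\Omega}(\theta(\theta^\circ))-1$ in the numerator. Splitting this into two integrals, I recognize them as $-\lambda F'_+(\lambda)$ and $+L'_+(\lambda)$ respectively, and the identity $L'_+(\lambda)-\lambda F'_+(\lambda)=0$ falls out.

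The main technical subtlety is justifying the integration by parts: when $\partial\Omega^{\circ}$ fails to be $C^1$, the map $\theta(\theta^\circ)$ need not be continuous, so $\cos_{\Omega}(\theta(\theta^\circ))$ is only measurable. This is precisely the reason for casting the calculation as a Lebesgue-Stieltjes integral — the Lipschitz regularity of both $\sin_{\Omega^{\circ}}$ and $1/(\lambda-\cos_{\Omega^{\circ}})$ is exactly what is needed for the classical integration-by-parts formula to remain valid, bypassing the lack of pointwise regularity of $\cos_{\Omega}(\theta(\cdot))$.
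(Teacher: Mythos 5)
Your proposal is correct and follows essentially the same route as the paper: the same Lebesgue--Stieltjes integration by parts using \eqref{conv-trig-DE-formulae} to obtain the alternative representation \eqref{A-alt-form}, the same subtraction, and the same application of \eqref{pythagorean-identity} to produce $L'_+(\lambda)-\lambda F'_+(\lambda)=0$. The only addition is your explicit dominated-convergence justification of the $C^\infty$ claim and of differentiation under the integral sign, which the paper leaves implicit.
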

\begin{proposition}
\label{L-props}
Пусть $k\in\mathbb{N}\cup\{0\}$. Функция $L_{+}$ обладает следующими свойствами.
\begin{enumerate}
\item \label{L-simple-props} Функция $L_{+}$ строго убывает от $+\infty$ до $0$ и строго выпукла.
\item\label{L-der-limit-infty}
$\lim\limits_{\lambda\to +\infty}\lambda^{k+1} L_{+}^{(k)}(\lambda)=(-1)^{k}k!2S_{\Omega^{\circ}}$.
\item\label{L-der-limit-M0}
Пусть $\argmax(\cos_{\polarSet})=[\theta^{\circ}_1,\theta^{\circ}_2]$, где $\theta^{\circ}_1< \theta^{\circ}_2$. Тогда
$$
\lim\limits_{\lambda\to M^{\circ}_{+}}(\lambda- M^{\circ}_{+})^{k+1}L_{+}^{(k)}(\lambda)=(-1)^kk!(\theta^{\circ}_2-\theta^{\circ}_1).
$$
Пусть $\argmax(\cos_{\polarSet})=\{\theta^{\circ}_1\}$. Тогда
$$
\Lim(L)_{+\nu,k}=\lim\limits_{\lambda\to M^{\circ}_{+}}(\lambda- M^{\circ}_{+})^{\nu}L_{+}^{(k)}(\lambda)=
\begin{cases}
	(-1)^k\infty, \nu\le 0,
	\\(-1)^k\infty, 0<\nu<k,
	\\ \ne 0,\nu=k>0,
	\\0,\nu \ge k+1.
\end{cases}
$$
При $k=0,0<\nu<1$ и $k>0,k\le\nu<k+1$ значение предела зависит от поведения $\cos_{\polarSet}$ в малой окрестности $\theta^{\circ}_1$. В частности, если существуют константы $A>0$, $\alpha\ge 1$, что в некоторой окрестности $\theta_1^{\circ}$ верно $M_+^{\circ}-\cos_{\polarSet}(\theta^{0})\ge A|\theta^{0}-\theta^{0}_1|^\alpha$, то для $\nu>k+1-\frac{1}{\alpha}$ выполнено $\Lim(L)_{+\nu,k}=0$.
\end{enumerate}
\end{proposition}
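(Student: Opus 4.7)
The plan starts by justifying the differentiation-under-the-integral-sign formula
$$L_+^{(k)}(\lambda) = (-1)^k k! \int_0^{2S_{\polarSet}} \frac{d\theta^{\circ}}{(\lambda - \cos_{\polarSet}(\theta^{\circ}))^{k+1}},$$
valid for all $k \ge 0$ and $\lambda > M_+^{\circ}$. Leibniz's rule applies because on any compact $[a,b] \subset (M_+^{\circ}, +\infty)$ the denominator is uniformly bounded below by $a - M_+^{\circ} > 0$, so each $\lambda$-derivative of the integrand is dominated by a constant. This proves $L_+ \in C^{\infty}$ and yields item \ref{L-simple-props} at once: the strict signs of $L_+'$ and $L_+''$ give strict monotone decrease and strict convexity; $L_+(\lambda) \to 0$ as $\lambda \to +\infty$ follows by dominated convergence, and $L_+(\lambda) \to +\infty$ as $\lambda \to M_+^{\circ}$ follows by monotone convergence, since the integrand grows without bound on $\argmax \cos_{\polarSet}$.

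For item \ref{L-der-limit-infty}, rewrite
$$\lambda^{k+1} L_+^{(k)}(\lambda) = (-1)^k k! \int_0^{2S_{\polarSet}} \left(\frac{\lambda}{\lambda - \cos_{\polarSet}(\theta^{\circ})}\right)^{k+1} d\theta^{\circ};$$
for $\lambda \ge 2M_+^{\circ}$ the integrand is bounded by $2^{k+1}$ and tends pointwise to $1$, so dominated convergence delivers the claimed value $(-1)^k k! \cdot 2S_{\polarSet}$.

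For item \ref{L-der-limit-M0} set $\varepsilon = \lambda - M_+^{\circ}$ and $h(\theta^{\circ}) = M_+^{\circ} - \cos_{\polarSet}(\theta^{\circ}) \ge 0$. The key observation is that the ratio $\bigl(\varepsilon/(\varepsilon + h(\theta^{\circ}))\bigr)^{k+1}$ is bounded above by $1$ and converges pointwise to the characteristic function of $\{h=0\} = \argmax \cos_{\polarSet}$, so dominated convergence gives
$$\lim_{\varepsilon \to 0^+} \varepsilon^{k+1} L_+^{(k)}(\lambda) = (-1)^k k! \cdot \mu(\argmax \cos_{\polarSet}),$$
equal to $(-1)^k k! (\theta_2^{\circ} - \theta_1^{\circ})$ in case (a) and to $0$ in case (b) when $\nu = k+1$. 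The range $\nu > k+1$ in (b) reduces to this by factoring $\varepsilon^{\nu} = \varepsilon^{\nu-(k+1)} \cdot \varepsilon^{k+1}$.

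Finally, for case (b) with $\nu \le k$, I would exploit the Lipschitz property of $\cos_{\polarSet}$ (a standard feature of generalized trigonometric functions, see \cite{convexTrig}): if $K$ is a Lipschitz constant, then $h(\theta^{\circ}) \le K|\theta^{\circ} - \theta_1^{\circ}|$, and so $\{h \le \varepsilon\} \supset [\theta_1^{\circ} - \varepsilon/K,\, \theta_1^{\circ} + \varepsilon/K]$ has measure at least $2\varepsilon/K$. Restricting the integral to this level set gives the lower bound
$$\int_0^{2S_{\polarSet}} \frac{d\theta^{\circ}}{(\varepsilon + h(\theta^{\circ}))^{k+1}} \ge \frac{2\varepsilon/K}{(2\varepsilon)^{k+1}} = \frac{1}{K \cdot 2^k \cdot \varepsilon^k},$$
so $\varepsilon^{\nu}$ times the integral is $\gtrsim \varepsilon^{\nu-k}$, which diverges for $\nu < k$ and is bounded below by a positive constant for $\nu = k$, yielding the verdicts $(-1)^k \infty$ and ``$\ne 0$'' respectively. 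For the supplementary clause with hypothesis $h(\theta^{\circ}) \ge A|\theta^{\circ} - \theta_1^{\circ}|^{\alpha}$ near $\theta_1^{\circ}$, I would split the integral into a small neighborhood of $\theta_1^{\circ}$, on which the scaling substitution $s = (\varepsilon/A)^{1/\alpha} u$ shows the integral is bounded above by a constant times $\varepsilon^{1/\alpha - (k+1)}$, and its complement, on which the integrand is uniformly bounded; multiplying by $\varepsilon^{\nu}$ then gives the vanishing as soon as $\nu > k + 1 - 1/\alpha$. The main technical obstacle is to keep dominating functions integrable right up to the boundary $\lambda = M_+^{\circ}$ and to coordinate the upper and lower bounds so that every range of $\nu$ listed in the statement is handled uniformly.
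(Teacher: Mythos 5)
Your overall route coincides with the paper's: differentiate under the integral sign to get $L_+^{(k)}(\lambda)=(-1)^kk!\int_0^{2S_{\polarSet}}(\lambda-\cos_{\polarSet}(\theta^{\circ}))^{-(k+1)}d\theta^{\circ}$, use dominated convergence for item \ref{L-der-limit-infty} and for the interval-argmax and $\nu\ge k+1$ cases of item \ref{L-der-limit-M0}, and use the Lipschitz property of $\cos_{\polarSet}$ for the lower bounds near $M_+^{\circ}$ together with the scaling substitution under the hypothesis $h\ge A|\theta^{\circ}-\theta_1^{\circ}|^{\alpha}$. However, there is one genuine gap: the case $k=0$, $\nu=0$ of a singleton argmax, which is exactly the assertion $L_+(\lambda)\to+\infty$ also needed in item \ref{L-simple-props}. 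Your level-set bound gives only
$$\int_0^{2S_{\polarSet}}\frac{d\theta^{\circ}}{\varepsilon+h(\theta^{\circ})}\;\ge\;\frac{\mu\{h\le\varepsilon\}}{2\varepsilon}\;\ge\;\frac{1}{K},$$
a constant, which does not show divergence; and your item-1 justification (``monotone convergence, since the integrand grows without bound on $\argmax\cos_{\polarSet}$'') is not a proof when the argmax is a single point, since pointwise blow-up on a null set is compatible with a finite limit integral (e.g.\ it would be finite if $h(\theta^{\circ})\sim|\theta^{\circ}-\theta_1^{\circ}|^{1/2}$). The fix is to use the Lipschitz bound pointwise in the integrand rather than only to measure a level set, as the paper does: $h(x+\theta_1^{\circ})\le K|x|$ gives
$$\int_{-\delta}^{\delta}\frac{dx}{\varepsilon+h}\;\ge\;\frac{2}{K}\ln\Bigl(1+\frac{K\delta}{\varepsilon}\Bigr)\longrightarrow+\infty,$$
and the same pointwise bound reproduces your $\varepsilon^{-k}$ rate for $k\ge1$, so all remaining cases of $\nu\le k$ go through unchanged.

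A minor additional slip: in the supplementary clause, for $k=0$, $\alpha=1$ the rescaled integral is \emph{not} bounded by a constant times $\varepsilon^{1/\alpha-(k+1)}=\varepsilon^{0}$; it grows like $\ln(1/\varepsilon)$ (the paper notes this explicitly). This does not affect the conclusion, since $\varepsilon^{\nu}\ln(1/\varepsilon)\to0$ for every $\nu>0=k+1-\tfrac{1}{\alpha}$, but you should separate this logarithmic case when writing the estimate.
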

\begin{proof}
Пункты \ref{L-simple-props}, \ref{L-der-limit-infty} и пункт \ref{L-der-limit-M0} в случае, когда максимум функции $\cos_{\polarSet}$ достигается на невырожденном отрезке, следуют из определения \eqref{L-F-of-lambda-formulae-plus} непосредственно.
 
Докажем пункт \ref{L-der-limit-M0} в случае единственного аргмаксимума. Обозначим $\Delta=\lambda-M^{\circ}_+$, $x=\theta^{\circ}-\theta^{\circ}_1$, $\varphi(x)=M^{\circ}_+-\cos_{\polarSet}(x+\theta^{\circ}_1)$. Зафиксируем малое $\delta>0$ и обозначим $J=[-\theta^{\circ}_1,2S_{\polarSet}-\theta^{\circ}_1]\smallsetminus [-\delta,\delta]$. Тогда
\begin{equation}
\label{L-der-limit-M0-change-var}
\Lim(L)_{+\nu,k}=(-1)^kk!\left(\lim\limits_{\Delta\to 0}\int\limits_{-\delta}^{\delta}\frac{\Delta^{\nu}dx}{(\Delta+\varphi(x))^{k+1}}+\lim\limits_{\Delta\to 0}\int\limits_{J}\frac{\Delta^{\nu}dx}{(\Delta+\varphi(x))^{k+1}}\right),
\end{equation}
причем второй предел в скобке конечен при $\nu=0$, равен нулю при $\nu>0$ и равен $+\infty$ при $\nu<0$. Оценим первый предел.

Пусть $C>0$ -- некоторая константа Липшица для $\varphi$. Тогда
$$
\int\limits_{-\delta}^{\delta}\frac{\Delta^{\nu}}{(\Delta+\varphi(x))^{k+1}}dx\ge 2\int\limits_{0}^{\delta}\frac{\Delta^{\nu-k-1}}{(1+\frac{C}{\Delta}x)^{k+1}}dx=\frac{2\Delta^{\nu-k}}{C}\int\limits_{0}^{C\delta/\Delta}\frac{1}{(1+y)^{k+1}}dy>0.
$$
При $k=0$, неограниченность последнего выражения при $\Delta\to 0$ эквивалентна условию $\nu\le 0$, при $k>0$ -- условию $\nu<k$. При $\nu=k>0$ предел при $\Delta\to 0$ последнего выражения конечен и положителен.

Для $\nu\ge k+1$ утверждение следует из неотрицательности $\varphi$.

Предположим теперь, что в некоторой $\eps$-окрестности точки $x=0$ верно $\varphi(x)\ge A|x|^{\alpha}$. Тогда, выбрав в \eqref{L-der-limit-M0-change-var} $\delta$ меньше $\eps$, получаем
$$
\int\limits_{-\delta}^{\delta}\frac{\Delta^{\nu}}{(\Delta+\varphi(x))^{k+1}}dx\le2\int\limits_{0}^{\delta}\frac{\Delta^{\nu-k-1}}{(1+\frac{A}{\Delta}x^{\alpha})^{k+1}}dx=\frac{2\Delta^{\nu-k-1+\frac{1}{\alpha}}}{A^{\frac{1}{\alpha}}}\int\limits_{0}^{\sqrt[\alpha]{A}\delta/\sqrt[\alpha]{\Delta}}\frac{1}{(1+y^{\alpha})^{k+1}}dy.
$$
При $k=0,\alpha=1$ последний интеграл считается явно и зависит от $\Delta$ логарифмически. При $k>0$ или $\alpha>1$ последний интеграл можно оценить сходящимся интегралом с бесконечным верхним пределом. Поэтому, если $\nu-k-1+\frac{1}{\alpha}>0$, последнее выражение стремится к нулю при $\Delta\to 0$.
\end{proof}
\begin{proposition}
\label{A-props}
	Пусть $k\in\mathbb{N}\cup\{0\}$. Функция $F_+$ обладает следующими свойствами.
	\begin{enumerate}
		\item \label{A-simple-props} Функция $F_+$ строго убывает от $+\infty$ до $0$ и строго выпукла.
		\item\label{A-der-limit-infty}
		$\lim\limits_{\lambda\to +\infty}\lambda^{k+2} F_+^{(k)}(\lambda)=(-1)^k(k+1)!S_{\polarSet}$.
		\item\label{L-A-integral-formula}
		$F_+(\lambda)=-\int\limits_{\lambda}^{+\infty}\frac{L'_+(\mu)}{\mu}d\mu=\frac{L_+(\lambda)}{\lambda}-\int\limits_{\lambda}^{+\infty}\frac{L_+(\mu)}{\mu^2}d\mu$.
		\item\label{A-der-limit-M0}
		$\lim\limits_{\lambda\to M^{\circ}_{+}}(\lambda- M^{\circ}_{+})^{\nu}F_+^{(k)}(\lambda)=\frac{1}{M^{\circ}_+}\lim\limits_{\lambda\to M^{\circ}_{+}}(\lambda- M^{\circ}_{+})^{\nu}L_{+}^{(k)}(\lambda)$, $\nu\in\R$.
	\end{enumerate}
\end{proposition}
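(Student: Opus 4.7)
План. Удобно начать с пункта \ref{L-A-integral-formula}, так как он даёт явную связь $F_+$ и $L_+$, из которой затем извлекаются остальные свойства. По Предложению \ref{L-A-DE} имеем $F'_+(\lambda) = L'_+(\lambda)/\lambda$, а из интегрального представления \eqref{L-F-of-lambda-formulae-plus} и ограниченности $\cos_\Omega$ следует, что $F_+(\lambda)\to 0$ при $\lambda\to+\infty$. Интегрированием равенства $F'_+ = L'_+/\lambda$ от $\lambda$ до $+\infty$ получается первая форма, а вторая --- интегрированием по частям, при этом внеинтегральный член на бесконечности обращается в ноль ввиду асимптотики $L_+(\mu)/\mu\to 0$ из Предложения \ref{L-props}, пункта \ref{L-der-limit-infty}.

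Затем перейду к пункту \ref{A-simple-props}. Монотонное убывание следует из $F'_+ = L'_+/\lambda$ с $L'_+ < 0$ и $\lambda > M^{\circ}_+ > 0$; строгая выпуклость --- из $F''_+ = L''_+/\lambda - L'_+/\lambda^2 > 0$, где оба слагаемых положительны. Предел $0$ на бесконечности уже использован, а стремление к $+\infty$ при $\lambda\to M^{\circ}_+$ вытекает из пункта \ref{A-der-limit-M0}. Пункт \ref{A-der-limit-infty} доказывается разложением $(\lambda - \cos_{\polarSet})^{-(k+1)}$ в ряд по степеням $1/\lambda$ в интегральном представлении $F_+^{(k)}$: ведущий член с $1/\lambda^{k+1}$ содержит $\int_0^{2S_{\polarSet}}\cos_\Omega(\theta(\theta^{\circ}))\,d\theta^{\circ} = \sin_{\polarSet}\big|_0^{2S_{\polarSet}} = 0$ (здесь использовано \eqref{conv-trig-DE-formulae} и периодичность), а следующий член с $1/\lambda^{k+2}$ содержит $(k+1)\int_0^{2S_{\polarSet}}\cos_\Omega\cos_{\polarSet}\,d\theta^{\circ} = (k+1)S_{\polarSet}$ --- площадь $\polarSet$, выраженная формулой Грина через параметризацию $(\cos_{\polarSet}, \sin_{\polarSet})$.

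Главную техническую трудность представляет пункт \ref{A-der-limit-M0}. Запишу отношение через явные интегральные представления: $F_+^{(k)}(\lambda)/L_+^{(k)}(\lambda)$ есть взвешенное среднее функции $\cos_\Omega(\theta(\theta^{\circ}))$ с положительным весом $(\lambda - \cos_{\polarSet}(\theta^{\circ}))^{-k-1}$, который при $\lambda\to M^{\circ}_+$ концентрируется в сколь угодно малой окрестности множества аргмаксимумов $\cos_{\polarSet}$. На этом множестве из уравнения $\cos'_{\polarSet}(\theta^{\circ}) = -\sin_\Omega(\theta(\theta^{\circ}))$ \eqref{conv-trig-DE-formulae} следует $\sin_\Omega(\theta(\theta^{\circ})) = 0$, и тогда тождество \eqref{pythagorean-identity} с $\cos_{\polarSet}(\theta^{\circ}) = M^{\circ}_+$ даёт $\cos_\Omega(\theta(\theta^{\circ})) = 1/M^{\circ}_+$. Следовательно, $\lim_{\lambda\to M^{\circ}_+} F_+^{(k)}(\lambda)/L_+^{(k)}(\lambda) = 1/M^{\circ}_+$, откуда непосредственно вытекает равенство пределов $(\lambda - M^{\circ}_+)^\nu F_+^{(k)} = (1/M^{\circ}_+)(\lambda - M^{\circ}_+)^\nu L_+^{(k)}$ (имеется в виду, что если правая часть имеет предел, то и левая; в противном случае оба предела равны $\pm\infty$). Основная сложность здесь --- формально обосновать переход к пределу в концентрирующемся интеграле, что достигается стандартным приёмом разбиения области интегрирования на малую $\delta$-окрестность аргмаксимума (где $\cos_\Omega$ равномерно близок к $1/M^{\circ}_+$) и её дополнение (где интеграл остаётся ограниченным при $\lambda\to M^{\circ}_+$), с последующим устремлением $\delta\to 0$.
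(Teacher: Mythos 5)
Пункты~\ref{A-simple-props}--\ref{L-A-integral-formula} у вас обоснованы верно. Пункт~\ref{L-A-integral-formula} вы получаете по существу так же, как в статье (интегрированием равенства $F_+'=L_+'/\lambda$ с учётом $F_+\to 0$ на бесконечности), а пункт~\ref{A-der-limit-infty} --- несколько иным, но корректным путём: разложением ядра по степеням $1/\lambda$ с использованием равенств $\int_0^{2S_{\polarSet}}\cos_{\set}(\theta(\theta^{\circ}))\,d\theta^{\circ}=0$ и $\int_0^{2S_{\polarSet}}\cos_{\set}(\theta(\theta^{\circ}))\cos_{\polarSet}(\theta^{\circ})\,d\theta^{\circ}=S_{\polarSet}$; в статье та же константа извлекается из соотношений $I_s=I_c$ и $I_c+I_s=2S_{\polarSet}$.

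В пункте~\ref{A-der-limit-M0} есть существенный пробел. Ваше рассуждение опирается на то, что в малой окрестности аргмаксимума функции $\cos_{\polarSet}$ значение $\cos_{\set}(\theta(\theta^{\circ}))$ равномерно близко к $1/M^{\circ}_{+}$. Это неверно, когда единственный аргмаксимум $\theta^{\circ}_{1}$ отвечает угловой точке границы $\partial\polarSet$, не лежащей на оси абсцисс. Например, для $\polarSet=\mathrm{conv}\{(1,1),(0,2),(-1,0),(0,-2)\}$ самая правая точка есть вершина $(1,1)$ и $M^{\circ}_{+}=1$; соответствующая ей грань множества $\set$ лежит на прямой $u+v=1$, и на двух примыкающих к вершине рёбрах $\partial\polarSet$ функция $\cos_{\set}(\theta(\theta^{\circ}))$ постоянна и равна $3/2$ и $1/2$ соответственно, то есть нигде в проколотой окрестности $\theta^{\circ}_{1}$ не близка к $1$ (равенство $\sin_{\set}(\theta(\theta^{\circ}))=0$ выполняется лишь в самой точке $\theta^{\circ}_{1}$ в многозначном смысле и имеет меру нуль). Утверждение при этом остаётся верным, но по более тонкой причине: вклады двух рёбер в интегралы для $F_+^{(k)}$ и $L_+^{(k)}$ входят с весами, пропорциональными $1/|\sin_{\set}(\theta)|$ на каждом ребре, и взвешенное среднее $\frac{u_1v_2-u_2v_1}{v_2-v_1}$ значений $u_1,u_2$ для двух точек $(u_1,v_1)$, $(u_2,v_2)$, лежащих на прямой $uM^{\circ}_{+}+v\sin_{\polarSet}(\theta^{\circ}_1)=1$, тождественно равно $1/M^{\circ}_{+}$. Описанный вами приём локализации этого сокращения не улавливает, так что доказательство в предложенном виде не проходит. Статья обходит локальный анализ целиком: пункт~\ref{A-der-limit-M0} выводится прямо из точного интегрального соотношения пункта~\ref{L-A-integral-formula}, связывающего $F_+$ с $L_+$ (а для $k\ge 1$ --- из предложения~\ref{L-A-DE}); рекомендую пойти этим путём.
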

\begin{proof}
Все свойства функции $F_+$ при $k\ge 1$ выводятся из соответствующих свойств $L_+$ и предложения \ref{L-A-DE}. Покажем, что утверждения верны при $k=0$.

Докажем пункт \ref{A-der-limit-infty}. Обозначим
$$
I_s=\int\limits_0^{2S_{\Omega^{\circ}}}\sin_{\Omega}(\theta(\theta^{\circ}))\sin_{\Omega^{\circ}}(\theta^{\circ})d\theta^{\circ},\quad
I_c=\int\limits_0^{2S_{\Omega^{\circ}}}\cos_{\Omega}(\theta(\theta^{\circ}))\cos_{\Omega^{\circ}}(\theta^{\circ})d\theta^{\circ}.
$$
Из соотношения \eqref{A-alt-form} получаем, $\lambda^2 F_+\to I_s, \lambda\to+\infty$. Из интегрирования по частям следует, что $I_s=I_c$. С другой стороны, ввиду тригонометрического тождества, $I_c+I_s=2S_{\Omega^{\circ}}$, откуда $I_s=S_{\Omega^{\circ}}$.

Из предложения \ref{L-A-DE} и полученного предела $\lim_{\lambda\to +\infty} F_+(\lambda)=0$ получаем интегральное соотношение в пункте \ref{L-A-integral-formula}, которое корректно в силу асимптотики $L_+$ при $\lambda\to +\infty$. Отсюда, в частности, следует пункт \ref{A-simple-props}.

Далее пункт \ref{A-der-limit-M0} также следует из интегрального соотношения в пункте \ref{L-A-integral-formula}.
\end{proof}

\begin{corollary}
\label{A-lambda-equation}
Пусть $A_{\circ}>0$, тогда уравнение $F_+(\lambda)=A_{\circ}$ имеет и единственное решение $\lambda_{\circ}\in (M_+^{\circ},+\infty)$.
\end{corollary}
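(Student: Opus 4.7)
The plan is to derive this as an immediate consequence of Proposition \ref{A-props}, item \ref{A-simple-props}. That proposition already establishes that $F_+$ is continuous (in fact $C^\infty$ by Proposition \ref{L-A-DE}) and strictly decreasing on $(M_+^\circ, +\infty)$, with the boundary asymptotics $F_+(\lambda)\to +\infty$ as $\lambda\to M_+^{\circ}$ from the right, and $F_+(\lambda)\to 0$ as $\lambda\to +\infty$.

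First I would invoke continuity plus the two limiting values to apply the intermediate value theorem: since $F_+$ takes values arbitrarily close to $0$ and arbitrarily large, and the interval $(M_+^\circ,+\infty)$ is connected, the image of $F_+$ contains the entire positive ray $(0,+\infty)$. In particular there exists at least one $\lambda_\circ\in (M_+^\circ, +\infty)$ with $F_+(\lambda_\circ)=A_\circ$. Then strict monotonicity rules out any second preimage, yielding uniqueness.

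There is essentially no obstacle here, since everything needed has been packaged into Proposition \ref{A-props}; the corollary is purely a statement that the already-established strict monotone bijection $F_+\colon(M_+^\circ,+\infty)\to(0,+\infty)$ inverts at the value $A_\circ$. The only small subtlety worth noting is that the limit at $M_+^\circ$ that produces the value $+\infty$ is exactly the case $k=0$, $\nu\le 0$ of Proposition \ref{A-props}, item \ref{A-der-limit-M0}, combined with the analogous limit for $L_+$ from Proposition \ref{L-props}, item \ref{L-der-limit-M0}; so once those are in hand, the proof reduces to a single line.
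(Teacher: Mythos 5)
Ваше доказательство верно и совпадает с тем, что подразумевается в статье: следствие выводится непосредственно из пункта \ref{A-simple-props} предложения \ref{A-props} (строгое убывание $F_+$ от $+\infty$ до $0$ плюс непрерывность), поэтому в статье оно оставлено без отдельного доказательства. Применение теоремы о промежуточном значении для существования и строгой монотонности для единственности — ровно тот однострочный аргумент, который здесь и предполагается.
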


\subsection{Функции $\mathcal{L}_{\pm}(F), \mathcal{F}_{\pm}(L)$}
Монотонность функций $L_+(\lambda), F_+(\lambda)$ позволяет рассмотреть зависимость длины от площади и площади от длины.

Пусть $\Lambda_+(L),L>0$ есть функция, обратная к $L_+$. Рассмотрим функцию $\mathcal{F_+}(L)=F_+(\Lambda_+(L)), L>0$. Из предложений \ref{L-A-DE}, \ref{L-props}, \ref{A-props} следует
\begin{proposition}
\label{F-of-L-props}
	Функция $\mathcal{F_+}$ обладает следующими свойствами.
	\begin{enumerate}
		\item Функция $\mathcal{F_+}\in C^{\infty}((0,+\infty))$, строго возрастает от $0$ до $+\infty$ и строго выпукла.
		\item $\mathcal{F_+}'(L)=\frac{1}{\Lambda_+(L)}$, $\lim\limits_{L\to 0}\mathcal{F_+}'(L)=0$, $\lim\limits_{L\to +\infty}\mathcal{F_+}'(L)=\frac{1}{M_{+}^{\circ}}$.
		\item $\mathcal{F_+}''(L)=-\frac{\Lambda'_+(L)}{(\Lambda_+(L))^2}$, $\lim\limits_{L\to 0}\mathcal{F_+}''(L)=\frac{1}{2S_{\polarSet}}$, $\lim\limits_{L\to +\infty}\mathcal{F_+}''(L)=0$.
		\item $\lim\limits_{L\to 0}\frac{\mathcal{F_+}(L)}{L\mathcal{F'_+}(L)}=\frac{1}{2}$, $\lim\limits_{L\to 0}\frac{\mathcal{F_+}(L)}{L^2}=\frac{1}{4S_{\polarSet}}$, $\lim\limits_{L\to +\infty}\frac{\mathcal{F_+}(L)}{L}=\frac{1}{M_+^{\circ}}$.
		\item 
		Существование асимптоты кривой $(L,\mathcal{F_+}(L)),L>0$ при $L\to +\infty$ эквивалентно конечности предела
		$$
		\label{A-props-asymptote-a}
		0<a_+=\lim\limits_{\lambda\to M^{\circ}_+}\int\limits_{\lambda}^{+\infty}\frac{L_+(\mu)}{\mu^2}d\mu=\lim\limits_{\lambda\to M^{\circ}_+}\left(\frac{L_+(\lambda)}{\lambda}-F_+(\lambda)\right)=\lim\limits_{P\to \frac{1}{M^{\circ}_+}}\mathcal{F_+^*}(P),
		$$
		где $\mathcal{F}_+^*(P)=PL_+(\frac{1}{P})-F_+(\frac{1}{P}),P\in (0,\frac{1}{M^{\circ}_+})$ -- функция, двойственная к функции $\mathcal{F}_+$. В случае существования асимптота имеет вид $y(x)=\frac{1}{M_+^{\circ}}x-a_+$.
	\end{enumerate}
\end{proposition}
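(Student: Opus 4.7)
План доказательства будет опираться на то, что $\Lambda_+\colon (0,+\infty)\to (M_+^\circ,+\infty)$ корректно определена как обратная к строго убывающей $L_+$ (пункт~1 предложения~\ref{L-props}), причём по теореме об обратной функции $\Lambda_+\in C^\infty$ и $\Lambda_+'<0$. Ключевое вычисление -- применение тождества $L'_+(\lambda)=\lambda F'_+(\lambda)$ из предложения~\ref{L-A-DE} к цепочечной формуле $\mathcal{F}_+'(L)=F'_+(\Lambda_+(L))\cdot\Lambda_+'(L)=F'_+(\Lambda_+(L))/L'_+(\Lambda_+(L))$, что даёт поразительно простое равенство $\mathcal{F}_+'(L)=1/\Lambda_+(L)$. Из него немедленно вытекает пункт~2 (оба предела получаются из $\Lambda_+(L)\to +\infty$ при $L\to 0$ и $\Lambda_+(L)\to M_+^\circ$ при $L\to +\infty$), а также формула для $\mathcal{F}_+''$ в пункте~3 прямым дифференцированием. Гладкость, строгая монотонность и строгая выпуклость $\mathcal{F}_+$ из пункта~1 следуют из аналогичных свойств $\Lambda_+$ и $F_+$.

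Пределы $\mathcal{F}_+''$ в пункте~3 я вычислю, переходя к $\lambda=\Lambda_+(L)$ в формуле $\mathcal{F}_+''(L)=-1/(\lambda^2 L'_+(\lambda))$. Для $L\to 0$ ($\lambda\to +\infty$) пункт~2 предложения~\ref{L-props} с $k=1$ даёт $\lambda^2 L'_+(\lambda)\to -2S_{\polarSet}$, откуда $\mathcal{F}_+''\to 1/(2S_{\polarSet})$. Для $L\to +\infty$ ($\lambda\to M_+^\circ$) пункт~3 того же предложения показывает, что $|L'_+(\lambda)|\to +\infty$, поэтому $\mathcal{F}_+''\to 0$. Пункт~4 сводится к главным асимптотикам $L\sim 2S_{\polarSet}/\lambda$ и $\mathcal{F}_+(L)=F_+(\lambda)\sim S_{\polarSet}/\lambda^2$ при $L\to 0$ (пункты~2 предложений~\ref{L-props} и~\ref{A-props} с $k=0$), из которых значения $1/2$ и $1/(4S_{\polarSet})$ получаются прямой подстановкой; последний предел при $L\to +\infty$ следует из уже доказанного $\mathcal{F}_+'\to 1/M_+^\circ$ по правилу Лопиталя.

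Наиболее содержательная часть -- пункт~5, и именно здесь я ожидаю основное техническое препятствие. Существование асимптоты эквивалентно конечности $\lim_{L\to +\infty}(\mathcal{F}_+(L)-L/M_+^\circ)$. Подставив $L=L_+(\lambda)$ и воспользовавшись интегральной формулой из пункта~3 предложения~\ref{A-props}, получу
$$
\mathcal{F}_+(L)-\frac{L}{M_+^\circ}=L_+(\lambda)\left(\frac{1}{\lambda}-\frac{1}{M_+^\circ}\right)-\int_{\lambda}^{+\infty}\frac{L_+(\mu)}{\mu^2}\,d\mu.
$$
Тонкий момент -- исчезновение первого слагаемого при $\lambda\to M_+^\circ$: если $\argmax(\cos_{\polarSet})$ есть единственная точка, то $(\lambda-M_+^\circ)L_+(\lambda)\to 0$ (пункт~3 предложения~\ref{L-props} при $\nu=1$, $k=0$, случай $\nu\ge k+1$), и разность сводится к $-a_+$; если же $\argmax(\cos_{\polarSet})$ -- невырожденный отрезок, то одновременно первое слагаемое стремится к ненулевой константе и интеграл расходится логарифмически, так что $a_+=+\infty$ и асимптоты нет. Наконец, тождество $a_+=\mathcal{F}_+^*(1/M_+^\circ)$ проверяется прямым вычислением супремума $\mathcal{F}_+^*(P)=\sup_L(PL-\mathcal{F}_+(L))$: точка максимума определяется условием $\mathcal{F}_+'(L)=P$, то есть $L=L_+(1/P)$, подстановка даёт $\mathcal{F}_+^*(P)=PL_+(1/P)-F_+(1/P)$, а предельный переход $P\to 1/M_+^\circ$ замыкает цепочку равенств.
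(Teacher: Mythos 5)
Ваше рассуждение верно и идёт ровно тем же путём, который подразумевает статья: в ней доказательство предложения~\ref{F-of-L-props} не расписано, а лишь сказано, что оно следует из предложений~\ref{L-A-DE}, \ref{L-props} и~\ref{A-props}, и вы корректно восстанавливаете все детали именно из этих трёх утверждений (тождество $L'_+=\lambda F'_+$ для формулы $\mathcal{F}_+'=1/\Lambda_+$, предельные соотношения при $\lambda\to+\infty$ и $\lambda\to M_+^{\circ}$ для пунктов 2--4, интегральное представление $F_+$ для пункта 5). Разбор случая невырожденного отрезка $\argmax(\cos_{\polarSet})$ и проверка двойственности по Лежандру выполнены аккуратно, существенных пробелов нет.
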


Монотонность функции $\mathcal{F_+}$ позволяет рассмотреть также обратную к ней строго вогнутую функцию $\mathcal{L_+}(A), A>0$. Аналогично введем и функции $\mathcal{F}_-(L)=F_-(\Lambda_-(L)), L>0$, $\mathcal{L_-}=\mathcal{F}_-^{-1}$, где $\Lambda_-=L_-^{-1}$.

\section{Об изопериметрических контурах}
В нижеследующей теореме дано полное описание изопериметрических контуров на финслеровой плоскости Лобачевского, то есть таких простых (замкнутых) контуров, на которых достигается оптимальное соотношение длины и площади.
\begin{theorem}[Об изопериметрических контурах]
\label{theorem-optimal-curves}
Для любой точки $g_{\circ}=(x_{\circ},y_{\circ})\in G$ и числа $A_{\circ}> 0$ существуют два семейства $\Gamma_+(g_{\circ},A_{\circ})=\{\gamma_{+}^{\alpha}\}$,  $\Gamma_-(g_{\circ},A_{\circ})=\{\gamma_{-}^{\alpha}\}$, $\alpha\in [0,2S_{\polarSet})$ простых липшицевых контуров, являющихся соответственно знаку решениями изопериметрических задач \eqref{geom-problem-statement}. Обратно, любое решение $\gamma_{\pm}$ задачи \eqref{geom-problem-statement} есть контур из семейства $\Gamma_{\pm}(g_{\circ},A_{\circ})$.
	
Каждый контур $\gamma_{\pm}^{\alpha}$ семейства $\Gamma_{\pm}(g_{\circ},A_{\circ})$ является границей поляры $\polarSet$, повернутой на $\mp\frac{\pi}{2}$, растянутой в $|R_{\pm}|$ раз и сдвинутой на вектор $(c_{x\pm},c_{y\pm})$. Соответствующие константы определяются однозначно из системы
	\begin{equation}
		\label{iso-optimal-constants}
		\begin{cases}
			F_{\pm}(\lambda_{\pm})=\pm A_{\circ},\\
			R_{\pm}(\alpha,y_{\circ},A_{\circ})=\frac{y_{\circ}}{\lambda_{\pm}-\cos_{\Omega^{\circ}}(\alpha)},\\
			c_{ x\pm}(\alpha,x_{\circ},y_{\circ},A_{\circ})=x_{\circ}-R_{\pm}\sin_{\polarSet}(\alpha),\\
			c_{ y\pm}(\alpha,y_{\circ},A_{\circ})=R_{\pm}\lambda_{\pm}.
		\end{cases}
	\end{equation}
Обратно, любой контур $\gamma\subset G$, представляющий собой границу поляры, повернутой на $\mp\frac{\pi}{2}$, растянутой в $R>0$ раз и сдвинутой в верхнюю полуплоскость, является изопериметрическим контуром, то есть принадлежит семейству $\Gamma_{\pm}(g_{\circ}, A_{\circ})$ для $g_{\circ}\in \gamma$, $A_{\circ}=\Area(U_{\gamma})$, где $U_{\gamma}$ -- область, ограниченная $\gamma$.

Липшицева натуральная параметризация $(x_\pm(t), y_\pm(t))$ контура $\gamma_{\pm}^{\alpha}$ c положительной ориентацией в случае $\gamma_{+}^{\alpha}$ и отрицательной ориентацией в случае $\gamma_{-}^{\alpha}$ имеет вид
\begin{equation}
\label{iso-curves}
	\begin{cases}
		x_{\pm}(t)=R_{\pm}\sin_{\polarSet}(\theta_{\pm}^{\circ}(t))+c_{x\pm},\\
		y_{\pm}(t)=-R_{\pm}\cos_{\polarSet}(\theta_{\pm}^{\circ}(t))+c_{y\pm},
		\\ \dot{\theta}_{\pm}^{\circ}(t)=\lambda_{\pm}-\cos_{\polarSet}(\theta_{\pm}^{\circ}(t)),\quad{\theta}_{\pm}^{\circ}(0)=\alpha,\\
		0\le t\le L_{\pm}(\lambda_{\pm}).
	\end{cases}
\end{equation}
Здесь $S_{\polarSet}$ есть евклидова площадь поляры $\polarSet$, а функции $L_{\pm}$, $F_{\pm}$ задаются соотношениями \eqref{L-F-of-lambda-formulae-plus}, \eqref{L-F-of-lambda-formulae-minus}.
\end{theorem}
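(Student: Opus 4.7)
The plan is to combine the Pontryagin maximum principle classification from Section \ref{section-pmp} with the bijection between area values and the parameter $\lambda$ provided by Corollary \ref{A-lambda-equation}, then address simplicity and the converse separately.

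First I would invoke existence: Filippov's theorem (as noted in Section \ref{section_optimal_control_statement}) guarantees an optimal trajectory for \eqref{optimal-control-problem-statement}, which by PMP must be an extremal. The case analysis in Section \ref{section-pmp} has already excluded every subcase except the main one ($H_{\circ} > 0$, $r_{\circ} \ne 0$, $p_{\circ} \ne 0$), in which the trajectory is precisely the translated-rotated-scaled polar boundary given by \eqref{pmp-xy-formulae}. So every optimal contour must have the claimed geometric form, and the inclusion direction (``only if'') of the theorem will follow automatically once I show that these extremals are simple contours.

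Next I would parametrize these candidate extremals. For \eqref{pmp-xy-formulae} to close up into a simple contour, $\theta^{\circ}(t)$ must make exactly one revolution over $[0,T]$; the sign of the revolution is governed by $\mathrm{sign}(R_{\circ})$ and determines the orientation. The change of variable in the length and area integrals (already performed to obtain \eqref{L-F-of-lambda-formulae-plus}, \eqref{L-F-of-lambda-formulae-minus}) gives $T = L_{\pm}(\lambda)$ and $z(T) = \pm F_{\pm}(\lambda)$ with $\lambda = c_{y\circ}/R_{\circ}$. The area constraint $z(T) = \pm A_{\circ}$ becomes $F_{\pm}(\lambda_{\pm}) = \pm A_{\circ}$, which by Corollary \ref{A-lambda-equation} has a unique solution $\lambda_{\pm}$. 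With $\lambda_{\pm}$ fixed, the shape of the trajectory is determined up to a rigid translation and the choice of the initial angle $\alpha = \theta^{\circ}(0) \in [0, 2S_{\polarSet})$. Imposing $(x(0), y(0)) = g_{\circ}$ in \eqref{pmp-xy-formulae} together with the identity $c_{y\circ} = R_{\pm}\lambda_{\pm}$ produces exactly the system \eqref{iso-optimal-constants}, which uniquely determines $(R_{\pm}, c_{x\pm}, c_{y\pm})$ for each $\alpha$. This gives the two families $\Gamma_{\pm}$, and the natural parametrization \eqref{iso-curves} is immediate from the definition of $\cos_{\Omega}, \sin_{\Omega}$.

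Simplicity follows because each $\gamma_{\pm}^{\alpha}$ is an affine image of $\partial\polarSet$, a simple closed convex curve, and lies in the upper half-plane thanks to the constraint $\lambda_{\pm} > M_+^{\circ}$ (respectively $\lambda_{\pm} < -M_-^{\circ}$); thus the extremal is an admissible contour for \eqref{geom-problem-statement}, which combined with Filippov existence yields optimality. The converse direction is then a direct verification: given any contour $\gamma$ that is a translated, rotated by $\mp\pi/2$, and scaled copy of $\partial\polarSet$ lying in the upper half-plane, I parametrize it via \eqref{iso-curves} with $\alpha$ the angle at $g_{\circ} \in \gamma$; the parameters $R, c_x, c_y$ extracted from the affine transformation then satisfy \eqref{iso-optimal-constants} with $A_{\circ} = \Area(U_{\gamma})$, placing $\gamma$ in $\Gamma_{\pm}(g_{\circ}, A_{\circ})$.

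The principal technical obstacle will be the careful bookkeeping of signs and orientations uniformly in the two cases $\pm$: matching the direction of rotation of $\theta^{\circ}$ with the orientation of the contour, checking that admissibility in the upper half-plane is exactly the condition $\lambda_{\pm} \in (M_+^{\circ}, +\infty)$ or $(-\infty, -M_-^{\circ})$, and ensuring that the signed area integral \eqref{area-set-integral} produces precisely $\pm A_{\circ}$ rather than the opposite sign. Once these sign conventions are aligned, the remaining assertions reduce to the monotonicity and uniqueness properties of $F_{\pm}$ established in the previous subsection.
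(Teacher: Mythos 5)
Your overall architecture (Filippov existence, reduction via the PMP case analysis to the extremals \eqref{pmp-xy-formulae}, unique solvability of \eqref{iso-optimal-constants} through Corollary \ref{A-lambda-equation}, and the orientation bookkeeping) matches the paper's proof. However, there is one genuine gap: you never rule out the extremals that traverse the polar boundary $k\ge 2$ times. You dismiss them by saying that a simple closed contour requires exactly one revolution of $\theta^{\circ}$, but the relaxed problem \eqref{optimal-control-problem-statement} does not impose simplicity --- that constraint was explicitly dropped when passing from \eqref{geom-problem-statement} to \eqref{optimal-control-problem-statement}. A $k$-fold revolution with $F_{+}(\lambda)=A_{\circ}/k$ per loop is a perfectly admissible trajectory satisfying $z(T)=A_{\circ}$, and a priori one of these could realize a smaller time $T$ than the single loop. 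Consequently your step ``simplicity $+$ Filippov existence yields optimality'' is incomplete: Filippov guarantees that \emph{some} extremal is optimal, but until the multi-loop competitors are eliminated you cannot conclude that the optimum is the one-revolution curve, nor that the minimizer of \eqref{optimal-control-problem-statement} is self-intersection-free --- and the latter is exactly what licenses transferring the answer back to \eqref{geom-problem-statement}.

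The paper closes this gap with a short but essential argument: a $k$-loop extremal enclosing total area $A_{\circ}$ has length $k\,\mathcal{L}_{+}(A_{\circ}/k)$, and the strict concavity of $\mathcal{L}_{+}$ together with $\mathcal{L}_{+}(A)\to 0$ as $A\to 0$ gives $\mathcal{L}_{+}(kA)<k\,\mathcal{L}_{+}(A)$ for $k>1$, so the single loop is strictly shorter (and analogously for $\mathcal{L}_{-}$). You need to insert this comparison (or an equivalent one) before invoking existence; with it, the rest of your plan goes through as written.
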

\begin{proof}
Как было замечено в разделе \ref{section_optimal_control_statement}, решение задачи \eqref{optimal-control-problem-statement} существует и является экстремалью принципа максимума Понтрягина. Единственными допустимыми экстремалями являются кривые \eqref{pmp-xy-formulae}, совершающие $k\in \mathbb{N}$ число оборотов по границе поляры. Из строгой вогнутости функции $\mathcal{L}_+$ и предельного соотношения $\mathcal{L}_+(A)\to 0,~A\to 0$ следует, что при $k> 1, A>0$ верно $\mathcal{L}_+(kA)<k\mathcal{L}_+(A)$ (аналогично для $\mathcal{L}_-$). Таким образом, экстремали, полученные более чем одним обходом по границе не являются оптимальными. Из следствия \ref{A-lambda-equation} следует, что при фиксированных $g_{\circ}, A_{\circ}$ для каждого $\alpha$ система \eqref{iso-optimal-constants} разрешается однозначно, что приводит к двум семействам кривых, параметризованных углом $\alpha$. При этом, все кривые из одного семейства имеют равную длину. Таким образом, кривые вида \eqref{iso-curves} и только они являются решениями задачи \eqref{optimal-control-problem-statement}.

Так как минимум в задаче \eqref{optimal-control-problem-statement} достигается на кривых без самопересечения (и только на них), соответствующие образы этих кривых, т.е. контуры семейств $\Gamma_+,\Gamma_-$, и только они являются решениями соответствующих изопериметрических задач \eqref{geom-problem-statement}.
\end{proof}

Заметим, что полученный результат согласуется с известным результатом для классической плоскости Лобачевского о том, что изопериметрические контуры суть евклидовы окружности. Известно, что римановы окружности на классической плоскости Лобачевского тоже суть евклидовы окружности, но со смещенным по вертикали центром. Однако стоит отметить, что, вообще говоря, контуры семейств $\Gamma_{\pm}$ финслеровыми окружностями не являются.

Теорему \ref{theorem-optimal-curves} интересно сравнить c результатами работы \cite{Busemann1947TheIP}, где было доказано, что изопериметрические контуры для евклидовой площади на плоскости Минковского тоже суть границы поляры, повернутой на $\mp\frac{\pi}{2}$, растянутой и сдвинутой. В этом случае длина контура и площадь заметаемой им области не зависят от сдвигов, а потому все изопериметрические контуры с фиксированной площадью и минимизирующие длину в положительном направлении (аналогично для контуров, минимизирующих длину в отрицательном направлении) отличаются лишь сдвигом, что не так в случае финслеровой плоскости Лобачевского.

\begin{figure}[h!]
	\begin{minipage}[h1]{0.32\linewidth}
		\center{\includegraphics[width=1\linewidth]{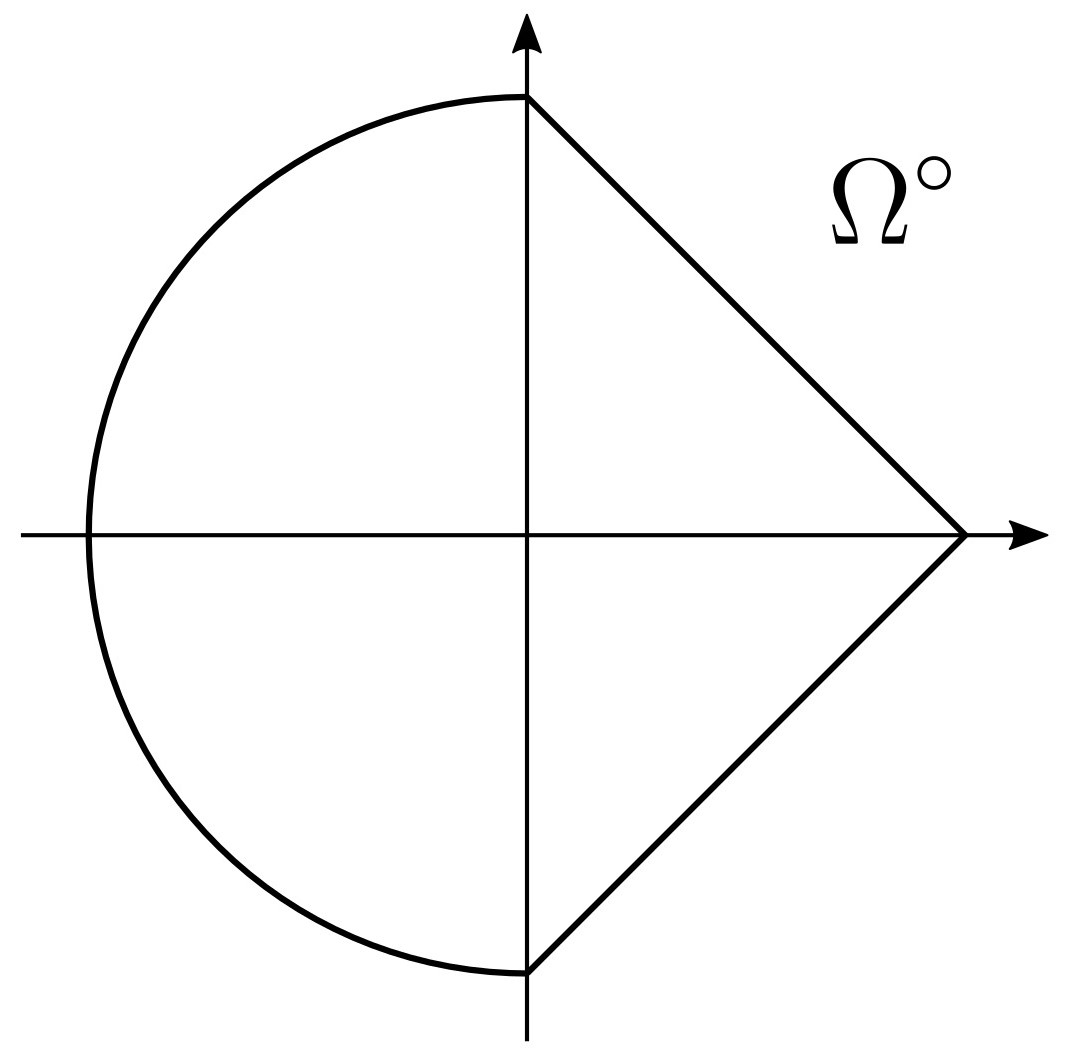}}
	\end{minipage}
	\hfill
	\begin{minipage}[h]{0.66\linewidth}
		\center{\includegraphics[width=1\linewidth]{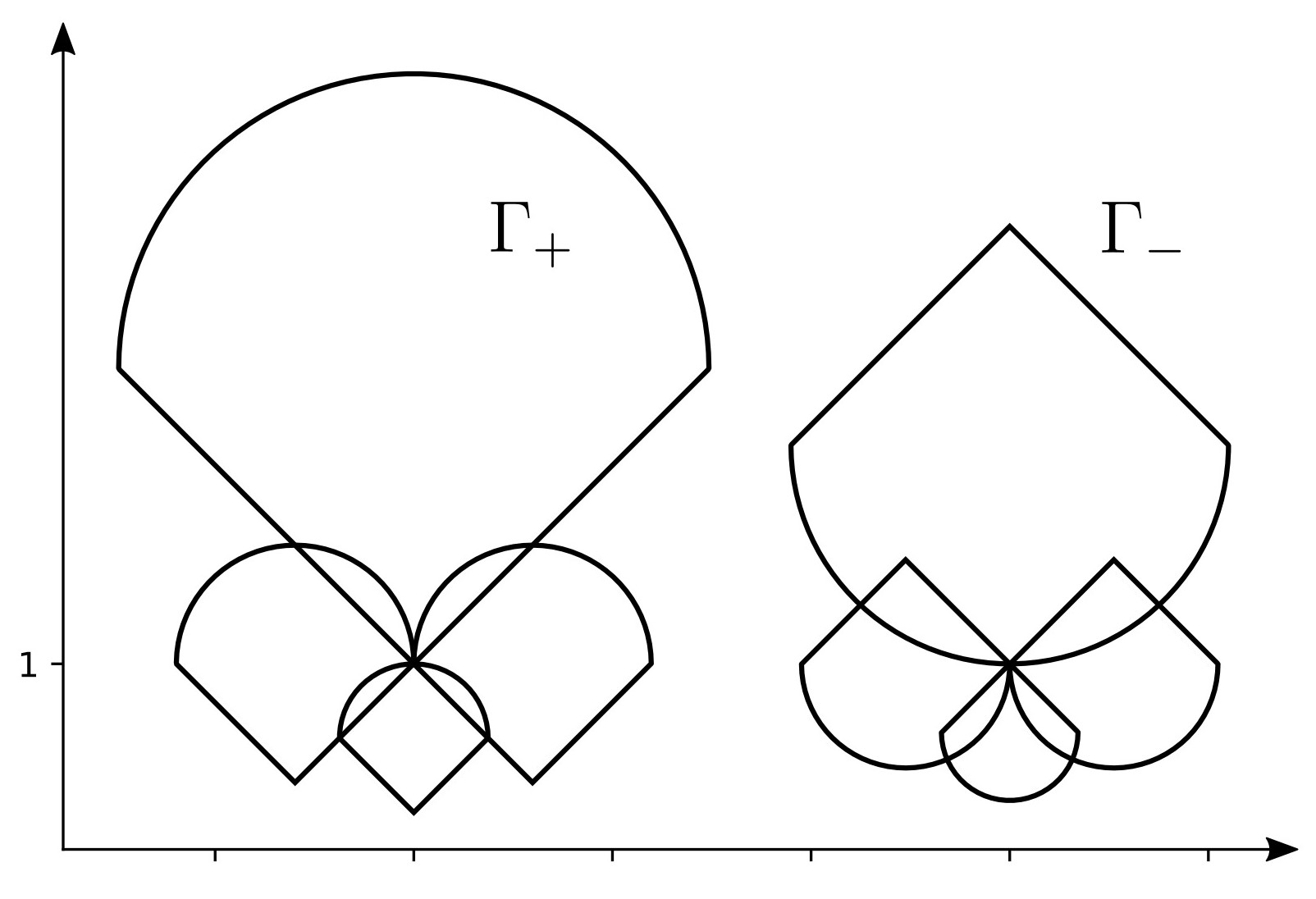}} 
	\end{minipage}
	\caption{Некоторые представители семейств $\Gamma_+((*,1),1)$ и $\Gamma_-((*,1),1)$ при несимметричном множестве $\Omega^{\circ}$.}
\end{figure}
\section{Об изопериметрических соотношениях}
Хорошо известно (см., например, \cite{Burago}), что для простого контура $\gamma$ длины $L_{\circ}$ с площадью заметаемой области $A_{\circ}$ на классической плоскости Лобачевского кривизны $-1$ выполняется неравенство
$$
L_{\circ}^2-4\pi A_{\circ}-A_{\circ}^2\ge 0,
$$
причем равенство достигается на и только на изопериметрических контурах (евклидовых окружностях). Данное неравенство разделяет множество пар $\{(L_{\circ}, A_{\circ})\in \mathbb{R}^2, L_{\circ}>0, A_{\circ}>0\}$ на две области, общей границей которых является гипербола
$$
\label{iso-circle-hyperbola}
\left(\frac{A_{\circ}+2\pi}{2\pi}\right)^2-\left(\frac{L_{\circ}}{2\pi}\right)^2=1.
$$
В общем случае граничная кривая может быть задана в терминах функций $\mathcal{L}_{\pm}$, $\mathcal{F}_{\pm}$.
\begin{theorem}[Об изометрических соотношениях]
\label{theorem-iso-ineq}
Пусть $\gamma\subset G$ - простой липшицев контур, ограничивающий область $U_{\gamma}$. Пусть $\Length_+(\gamma)=L_{+\circ}$, $\Length_-(\gamma)=L_{-\circ}$, $\Area(U_\gamma)=A_{\circ}$. Тогда выполнены следующие изопериметрические соотношения
\begin{equation}
\label{iso-ineq}
\begin{gathered}
L_{+\circ}\ge \mathcal{L_+}(A_{\circ})\quad\left(\Longleftrightarrow A_{\circ}\le \mathcal{F}_+(L_{+\circ})\right),\\
L_{-\circ}\ge \mathcal{L_-}(-A_{\circ})\quad\left(\Longleftrightarrow -A_{\circ}\ge \mathcal{F}_{-}(L_{-\circ})\right).
\end{gathered}
\end{equation}
Равенство в первом соотношении достигается на и только на контурах семейств $\Gamma_+(g_{\circ},A_{\circ})$ из теоремы \ref{theorem-optimal-curves},
а во втором -- достигается на и только на контурах семейств $\Gamma_-(g_{\circ},A_{\circ})$ из теоремы \ref{theorem-optimal-curves}, где элемент $g_{\circ}\in G$ произволен.
\end{theorem}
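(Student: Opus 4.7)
The plan is to reduce this theorem directly to Theorem~\ref{theorem-optimal-curves}, so the proof amounts to extracting the inequality from the already established minimization result. Given any simple Lipschitz contour $\gamma$ bounding a region of area $A_{\circ}$, I pick an arbitrary point $g_{\circ}\in\gamma$ (for instance, any point of the image). Then $\gamma$ is admissible for the geometric problem \eqref{geom-problem-statement} with this $g_{\circ}$ and area $A_{\circ}$, so its $\Length_{\pm}$ is at least the optimal value in \eqref{geom-problem-statement}.

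The next step is to identify this optimal value as $\mathcal{L}_{\pm}(\pm A_{\circ})$. By Theorem~\ref{theorem-optimal-curves}, every optimal contour in \eqref{geom-problem-statement} belongs to the family $\Gamma_{\pm}(g_{\circ}, A_{\circ})$ and, by \eqref{iso-curves}, has length exactly $L_{\pm}(\lambda_{\pm})$, where $\lambda_{\pm}$ is the unique solution of $F_{\pm}(\lambda_{\pm})=\pm A_{\circ}$ provided by Corollary~\ref{A-lambda-equation}. Unwinding the definition $\mathcal{F}_{\pm}(L)=F_{\pm}(\Lambda_{\pm}(L))$ with $\Lambda_{\pm}=L_{\pm}^{-1}$ and $\mathcal{L}_{\pm}=\mathcal{F}_{\pm}^{-1}$, I get
\[
L_{\pm}(\lambda_{\pm})=L_{\pm}\bigl(F_{\pm}^{-1}(\pm A_{\circ})\bigr)=\mathcal{L}_{\pm}(\pm A_{\circ}),
\]
which in particular shows that this common optimal length is independent of $g_{\circ}$, as the notation $\mathcal{L}_{\pm}(\pm A_{\circ})$ suggests. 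Combining with the previous paragraph yields $L_{+\circ}\ge\mathcal{L}_+(A_{\circ})$ and $L_{-\circ}\ge\mathcal{L}_-(-A_{\circ})$, and the equivalent formulations follow from the strict monotonicity of $\mathcal{F}_{\pm}$ established in Proposition~\ref{F-of-L-props}.

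For the equality cases, suppose $L_{+\circ}=\mathcal{L}_+(A_{\circ})$. Then $\gamma$ realizes the minimum in \eqref{geom-problem-statement} for the chosen $g_{\circ}$, so by the uniqueness part of Theorem~\ref{theorem-optimal-curves} it must belong to $\Gamma_+(g_{\circ}, A_{\circ})$. The converse, that every contour of $\Gamma_+(g_{\circ}, A_{\circ})$ satisfies equality, is the content of Theorem~\ref{theorem-optimal-curves} together with the identification $L_+(\lambda_+)=\mathcal{L}_+(A_{\circ})$ above; the argument for $\Gamma_-$ is identical with the sign changes already accounted for in the definitions of $L_-, F_-, \mathcal{L}_-$.

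The main (and only real) obstacle is really bookkeeping: one must be sure that the scalar quantities $\mathcal{L}_{\pm}(\pm A_{\circ})$ coincide with the optimal lengths predicted by Theorem~\ref{theorem-optimal-curves} regardless of $g_{\circ}$. This is where the left-invariance of the Finsler norm and the area form are implicitly used, but within the present framework it is already encoded in the fact that the formulas \eqref{L-F-of-lambda-formulae-plus}--\eqref{L-F-of-lambda-formulae-minus} depend only on the ratio $\lambda_{\circ}=c_{y\circ}/R_{\circ}$ and not on the horizontal shift $c_{x\circ}$ nor on $y_{\circ}$ separately. Once this is noted, the theorem is a direct corollary of Theorem~\ref{theorem-optimal-curves}.
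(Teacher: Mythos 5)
Your proposal is correct and follows essentially the same route as the paper, whose proof is a single sentence stating that the claim follows directly from Theorem~\ref{theorem-optimal-curves} and Proposition~\ref{F-of-L-props}; you have simply spelled out the reduction (admissibility of $\gamma$ for \eqref{geom-problem-statement}, identification of the optimal value with $\mathcal{L}_{\pm}(\pm A_{\circ})$ via Corollary~\ref{A-lambda-equation}, and the equality cases via the uniqueness part of Theorem~\ref{theorem-optimal-curves}) that the paper leaves implicit.
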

\begin{proof}
Утверждение непосредственно следует из теоремы \ref{theorem-optimal-curves} и предложения \ref{F-of-L-props}.
\end{proof}
Таким образом, в общем случае мы имеем два изопериметрических соотношения, совпадающих при $\Omega=-\Omega$, а обобщениями граничной гиперболы служат кривые $(L,\mathcal{F}_{+}(L)), L>0$ и $(L,-\mathcal{F}_{-}(L)), L>0$, которые могут быть заданы параметрически соотношениями \eqref{L-F-of-lambda-formulae-plus}, \eqref{L-F-of-lambda-formulae-minus}. Заметим, что в общем случае эти граничные кривые не являются гиперболами и даже могут не иметь асимптоту при $L\to+\infty$.

Для сравнения приведем изопериметрические соотношения на плоскости Минковского для евклидовой площади, полученные в работе \cite{Busemann1947TheIP}: $L^2_{\pm\circ}\ge 4S_{\Omega^{\circ}}A_{\circ}$. То есть, в этом случае аналогичные граничные кривые всегда являются параболами. Заметим также, что из предложения \ref{F-of-L-props} следует, что на финслеровой плоскости Лобачевского при малых $L$ верно $\mathcal{F_+}(L)= \frac{1}{4S_{\Omega^{\circ}}}L^2+o(L^2)$. Поэтому изопериметрическое неравенство на плоскости Минковского можно рассматривать как предельный случай изопериметрического соотношения на финслеровой плоскости Лобачевского при $L\to 0$.

\section{Примеры для p-кругов}
Рассмотрим множества $\Omega_{p}=\{(x,y):|x|^p+|y|^p\le 1\}, p\in [1,+\infty]$. Пусть $q=\frac{p}{p-1}$. Так как $\Omega_{p}=-\Omega_{p}$, то $L_-(\lambda)=L_+(-\lambda)$, $F_-(\lambda)=-F_+(-\lambda)$, и нам достаточно найти только функции $L_{+}(\lambda),F_+(\lambda),\lambda>1$. Непосредственные вычисления приводят к следующим результатам
$$
	L_{+}(\lambda)=
	\begin{cases} \frac{4\lambda}{\lambda^2-1}+2\ln(\frac{\lambda+1}{\lambda-1}), p=1,\\ 
		4\lambda\int\limits_{0}^{1}\frac{1}{(\lambda^2-x^2)(1-x^q)^{\frac{1}{p}}}dx, p\in (1,+\infty),\\
		2\ln(\frac{\lambda+1}{\lambda-1}), p=+\infty.
	\end{cases}
$$
$$
	F_+(\lambda)=
	\begin{cases}
		\frac{4}{\lambda^2-1},p=1,\\
		4\int\limits_{0}^{1}\frac{x^q}{(\lambda^2-x^2)(1-x^q)^{\frac{1}{p}}}dx,p\in (1,+\infty),\\
		2\ln(\frac{\lambda^2}{\lambda^2-1}),p=+\infty.
	\end{cases}
$$
$$
	a_{+}=
	\begin{cases}
		+\infty,p=1,\\
		4\int\limits_{0}^{1}\frac{(1-x^q)^{\frac{1}{q}}}{(1-x^2)}dx<+\infty,p\in (1,+\infty),\\
		4\ln 2,p=+\infty.
	\end{cases}
$$
Проверим, что результат теоремы \ref{theorem-iso-ineq} согласуется с результатом на классической плоскости Лобачевского.
\begin{proposition}
	\label{iso-ineq-circle}
	Для $\Omega=\Omega_{2}$ изопериметрические соотношения \eqref{iso-ineq} имеют хорошо известный вид:
	$$
	L_{+\circ}^2-4\pi A_{\circ}-A_{\circ}^2\ge 0.
	$$
\end{proposition}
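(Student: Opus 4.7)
План доказательства. При $p = q = 2$ подставлю $q = 2$ в общие формулы и применю замену $x = \sin u$, которая сводит интегралы к виду
$$
L_+(\lambda) = 4\lambda \int_0^{\pi/2} \frac{du}{\lambda^2 - \sin^2 u}, \qquad F_+(\lambda) = 4 \int_0^{\pi/2} \frac{\sin^2 u \, du}{\lambda^2 - \sin^2 u}.
$$
Тождество $\frac{\sin^2 u}{\lambda^2 - \sin^2 u} = \frac{\lambda^2}{\lambda^2 - \sin^2 u} - 1$ сразу даёт простое алгебраическое соотношение $F_+(\lambda) + 2\pi = \lambda\, L_+(\lambda)$, так что второй интеграл отдельно вычислять не потребуется.

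Далее применю подстановку $t = \tan u$ к оставшемуся интегралу, сводящую его к рациональному: $\int_0^\infty \frac{dt}{\lambda^2 + (\lambda^2 - 1) t^2} = \frac{\pi}{2\lambda\sqrt{\lambda^2-1}}$. Это даст явную формулу $L_+(\lambda) = \frac{2\pi}{\sqrt{\lambda^2 - 1}}$, в частности $(\lambda^2 - 1)\, L_+(\lambda)^2 = 4\pi^2$.

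Остаётся чисто алгебраический шаг. Возведя в квадрат соотношение $F_+(\lambda) + 2\pi = \lambda\, L_+(\lambda)$ и воспользовавшись равенством $\lambda^2 L_+^2 = L_+^2 + (\lambda^2 - 1) L_+^2 = L_+^2 + 4\pi^2$, получу $(F_+(\lambda) + 2\pi)^2 = L_+(\lambda)^2 + 4\pi^2$, то есть
$$
L_+(\lambda)^2 - F_+(\lambda)^2 - 4\pi F_+(\lambda) = 0
$$
вдоль всей параметризованной кривой $\lambda \mapsto (L_+(\lambda), F_+(\lambda))$. Поскольку эта кривая есть в точности график функции $A = \mathcal{F}_+(L)$, получаю $\mathcal{L}_+(A_\circ)^2 = A_\circ^2 + 4\pi A_\circ$, и требуемое неравенство немедленно следует из теоремы \ref{theorem-iso-ineq}. Серьёзных препятствий в доказательстве не ожидается -- все вычисления элементарны; единственное, за чем нужно следить, -- корректное выделение из интегрального тождества алгебраической связи между $L_+$ и $F_+$.
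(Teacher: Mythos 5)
Ваше доказательство верно и по существу повторяет доказательство из статьи: вы получаете то же ключевое соотношение $F_+(\lambda)+2\pi=\lambda L_+(\lambda)$ (тем же приёмом прибавления и вычитания $\lambda$ в числителе, лишь в другой параметризации интеграла), затем явно вычисляете $L_+(\lambda)=\frac{2\pi}{\sqrt{\lambda^2-1}}$ --- в статье этот шаг предлагается выполнить либо прямым интегрированием, как у вас, либо через ОДУ из предложения \ref{L-A-DE}, --- и делаете тот же вывод, что параметризованная кривая $(L_+(\lambda),F_+(\lambda))$ лежит на гиперболе, откуда $\mathcal{L}_+(A)=\sqrt{A^2+4\pi A}$ и требуемое неравенство следует из теоремы \ref{theorem-iso-ineq}.
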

\begin{proof}
	В данном случае функцию $F_+(\lambda)$ легко выразить через $L_+(\lambda)$, пользуясь тем, что $\Omega_2^{\circ}=\Omega_2$:
	$$
	F_+(\lambda)=\int_{0}^{2\pi}\frac{\cos(\theta)- \lambda + \lambda}{\lambda-\cos(\theta)}d\theta=\lambda L_{+}(\lambda)-2\pi.
	$$
	При $p=2$ интеграл в выражении $L_+(\lambda)$, конечно, считается явно. Или можно воспользоваться предложением \ref{L-A-DE}, что приводит к ОДУ
	$$
	L'_{+}=-\frac{\lambda}{(\lambda^2 - 1)} L_{+}.
	$$
	Учитывая пункт \ref{L-der-limit-infty} предложения \ref{L-props}, получаем:
	$$
	\begin{gathered}
		L_{+}(\lambda)=\frac{2\pi}{\sqrt{\lambda^2 - 1}},\quad F_+(\lambda)=2\pi(\frac{\lambda}{\sqrt{\lambda^2 - 1}} - 1).
	\end{gathered}
	$$
	Отсюда легко следует, что для любого $\lambda>1$ точка $\frac{1}{2\pi}(L_{+}(\lambda),F_+(\lambda)+2\pi)$ лежит на гиперболе $y^2-x^2=1$, а $\mathcal{L}_+(A)=\sqrt{A^2+4\pi A}$.
\end{proof}
Из явно выписанных формул для $L_+,~F_+$ получаем следующие соотношения.
\begin{remark}
	Для $\Omega_{1}$ изопериметрические соотношения \eqref{iso-ineq} имеют вид $$\frac{L_{+\circ}}{2}\ge \arcosh\left(\frac{A_{\circ}+2}{2}\right)+\sqrt{\left(\frac{A_{\circ}+2}{2}\right)^2-1}.$$
\end{remark}
\begin{remark}
	Для $\Omega_{\infty}$ изопериметрические соотношения \eqref{iso-ineq} имеют вид
	$$
	\cosh\left(\frac{L_{+\circ}}{4}\right)\ge e^{\frac{A_{\circ}}{4}}.
	$$
\end{remark}
\begin{remark}
В случае, когда $a_+<+\infty$, кажется разумным рассматривать <<нормированное>> изопериметрическое соотношение. А именно, обозначим $x_{\circ}=\frac{1}{a_+}L_{+\circ}$, $y_{\circ}=\frac{1}{a_+}(A_{\circ}+a_+)$. Тогда в случае $\Omega_2$ изопериметрическое соотношение может быть записано в виде $y_{\circ}^2-x_{\circ}^2\le 1$, а в случае $\Omega_{\infty}$ -- в виде $2^{x_{\circ}}+2^{-x_{\circ}}\ge 2^{y_{\circ}}$.
\end{remark}
\begin{figure}[h!]
	\center{\includegraphics[scale=0.8]{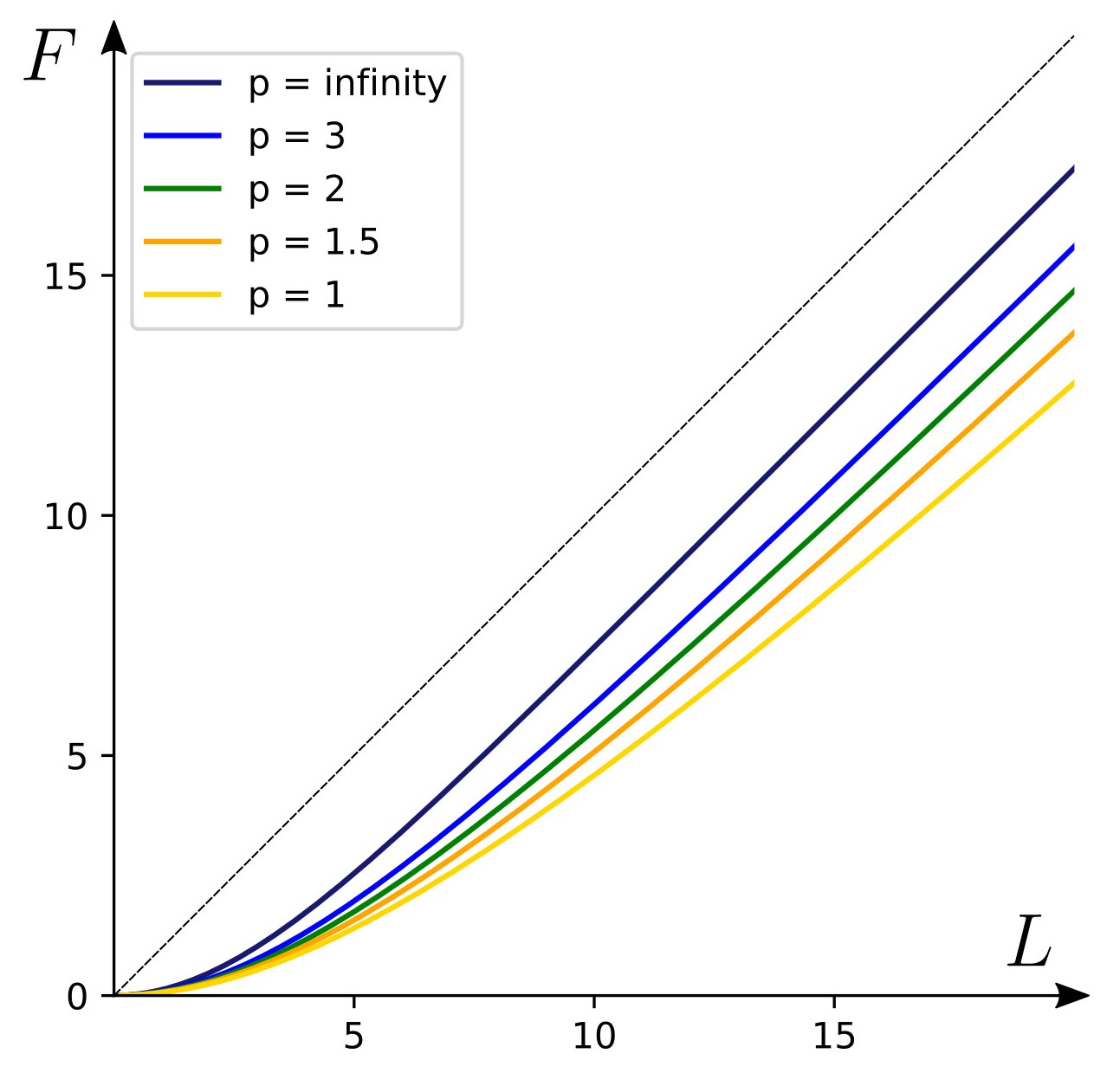}}
	\caption{Графики функций $F=\mathcal{F}_{+}(L)$ для множеств $\Omega_{p}=\{(x,y):|x|^p+|y|^p\le 1\}.$}
\end{figure}

\newpage


\begin{thebibliography}{99}
	\bibitem{Busemann1947TheIP}H. Busemann. The Isoperimetric Problem in the Minkowski Plane. American Journal of Mathematics. 1947. 69, 4. 863–871.
	\bibitem{Burago}Ю.Д. Бураго, В.А. Залгаллер. Геометрические неравенства. Л.,<<Наука>>, 1980.
	\bibitem{Gribanova}
	I.A. Gribanova. The quasihyperbolic plane. Sib Math J. 1999. 40, 2. 245–257.
	\bibitem{EvansGariepy}
	L.C. Evans, R.F. Gariepy. Measure Theory and Fine Properties of Functions (Revised Version). CRC Press,
	Boca Raton, 2015.

	\bibitem{convexTrig}
	L.V. Lokutsievskiy. Convex trigonometry with applications to sub-Finsler geometry.
	SB MATH. 2019. 210, 8. 120–148. \href{https://arxiv.org/abs/1807.08155}{	arXiv:1807.08155}
	\bibitem{lobachevsky-geodesics}
	A.A. Ardentov, L.V. Lokutsievskiy, Yu.L. Sachkov. Extremals for a series of sub-Finsler problems with 2-dimensional control via convex trigonometry. ESAIM: COCV. 2021. 27. 32–52. \href{https://arxiv.org/abs/2004.10194}{arXiv:2004.10194}
\end{thebibliography}
\end{document}